\documentclass{article}

\usepackage[english]{babel}

\usepackage[colorlinks=true, allcolors=blue]{hyperref}

\usepackage{graphicx}
\usepackage{bm}
\usepackage{amsmath}
\usepackage{amssymb}
\usepackage{amsfonts}
\usepackage{enumitem}
\usepackage{mathrsfs}
\usepackage{xcolor}
\usepackage{cleveref}
\usepackage{mathtools}
\usepackage{tikz}
\usepackage{pgf}
\usepackage{adjustbox}
\usepackage{comment}
\usepackage{natbib}
\let\cite\citep
\usepackage{thm-restate}
\usepackage{amsthm}

\usepackage[margin=1in]{geometry}

\newcommand{\pr}[0]{\mathrm{pr}}
\newcommand{\pos}[0]{\mathrm{pos}}
\newcommand{\obs}[0]{\mathrm{obs}}
\newcommand{\unknown}{\mbf{f}}
\newcommand{\data}{\mbf{y}}
\newcommand{\dataRV}{\mbf{Y}}
\newcommand{\noise}{\pmb{\epsilon}}
\newcommand{\fwdOper}[0]{\mbf{G}}
\newcommand{\hatFwdOper}[0]{\widehat{\mbf{G}}}
\newcommand{\hatFwdOperOLR}[0]{\widehat{\mbf{G}}^{\text{\tiny OLR}}}
\newcommand{\hatFwdOperROM}[0]{\widehat{\mbf{G}}^{\text{\tiny ROM}}}

\newcommand{\PrMn}{\pmb{\mu}_{\pr}}

\newcommand{\PrCov}{\mathbf\Gamma_{\pr}}
\newcommand{\rootPrCov}{\mathbf{S}_{\pr}}

\newcommand{\ObsCov}{\mathbf\Gamma_{\obs}}
\newcommand{\rootObsCov}{\mathbf{S}_{\obs}}

\newcommand{\PosMn}{\pmb{\mu}_{\pos}}
\newcommand{\hatPosMn}{\widehat{\pmb{\mu}}_{\pos}}
\newcommand{\hatPosMnOLR}[0]{\widehat{\pmb{\mu}}_{\pos}^{\text{\tiny OLR}}}
\newcommand{\hatPosMnROM}[0]{\widehat{\pmb{\mu}}_{\pos}^{\text{\tiny ROM}}}
\newcommand{\PosCov}{\mathbf\Gamma_{\pos}}
\newcommand{\hatPosCov}{\widehat{\mathbf\Gamma}_{\pos}}
\newcommand{\hatPosCovOLR}{\widehat{\mathbf\Gamma}_{\pos}^{\text{\tiny OLR}}}
\newcommand{\hatPosCovROM}{\widehat{\mathbf\Gamma}_{\pos}^{\text{\tiny ROM}}}

\newcommand{\ObsOper}{\mathbf{C}}

\newcommand{\fullModel}{\mathbf{K}}
\newcommand{\redModel}{\widehat{\mathbf{K}}}
\newcommand{\state}{\mathbf{u}}

\newcommand{\lSgMat}{\bm{\Omega}}
\newcommand{\lSgVec}{\bm{\omega}}
\newcommand{\rSgMat}{\bm{\Phi}}
\newcommand{\rSgVec}{\bm{\phi}}
\newcommand{\SgValMat}{\bm{\Delta}}
\newcommand{\SgVal}{\delta}

\newcommand{\testMat}{\mathbf{V}}
\newcommand{\trialMat}{\mathbf{W}}

\newcommand{\rankPPH}{\mathfrak{r}}

\newcommand{\Id}{\mathcal{I}}
\newcommand{\R}[0]{\mathbb{R}}
\newcommand{\N}[0]{\mathbb{N}}
\newcommand{\mbf}[1]{\mathbf{#1}}
\newcommand{\norm}[1]{\ensuremath{\lVert #1 \rVert}}
\newcommand{\Norm}[1]{\ensuremath{\left\lVert #1 \right\rVert}}

\newcommand{\KLD}[0]{\textup{D}_{\textup{KL}}}

\newcommand*{\todo}[1]{\bgroup\color{red}TODO: #1\egroup}

\newtheorem{theorem}{Theorem}[section]
\newtheorem{lemma}[theorem]{Lemma}
\newtheorem{proposition}[theorem]{Proposition}

\theoremstyle{definition}

\newtheorem{remark}[theorem]{Remark}

\newtheorem{assumption}[theorem]{Assumption}

\newcommand{\Crefprop}[1]{Proposition~\ref{#1}}
\newcommand{\Crefrem}[1]{Remark~\ref{#1}}
\newcommand{\Crefass}[1]{Assumption~\ref{#1}}
\newcommand{\Creflem}[1]{Lemma~\ref{#1}}
\setenumerate[1]{label=(\roman*)}

\usepackage{amsopn}

\DeclareMathOperator*{\range}{ran}

\DeclareMathOperator*{\rank}{rank}
\DeclareMathOperator*{\mathdim}{dim}
\DeclareMathOperator*{\argmin}{argmin}

\numberwithin{equation}{section}

\title{Error bounds for approximate posteriors from likelihood-informed reduced-order models}
\author{Han Cheng Lie\thanks{~Institut f\"ur Mathematik, Universit\"at Potsdam, Germany ({\tt han.lie@uni-potsdam.de}),  ORCID ID: 0000-0002-6905-9903} \and Jakob Scheffels\thanks{Engineering Risk Analysis Group, TUM School of Engineering and Design, and Institute for Advanced Study, Technical University of Munich, Germany ({\tt jakob.scheffels@tum.de}), ORCID ID: 0000-0002-9589-0426}  \and Elisabeth Ullmann\thanks{~Department of Mathematics, TUM School of Computation, Information and Technology, Technical University of Munich, Germany ({\tt elisabeth.ullmann@ma.tum.de}), ORCID ID: 0000-0002-5699-7394}}

\begin{document}

\maketitle

\begin{abstract}
In the design of computational methods for Bayesian inverse problems, costly forward model evaluations make it difficult to sample from or compute the posterior.
This motivates the need for approximate forward models that are cheaper to evaluate.
We consider reduced-order forward models which exploit the lower-dimensional structure in the Bayesian inverse problem by projecting to the `likelihood-informed subspace' of the parameter space where the prior-to-posterior update is significant.
However, the theoretical properties of these reduced-order forward models and their impact on the solution of the Baysian inverse problem are not always well-understood.
In this work we consider linear Gaussian inverse problems with a possibly singular prior covariance matrix.
We analyse a recently proposed reduced-order model which uses a Petrov-Galerkin projection to likelihood-informed subspaces that arise in optimal low-rank approximations of the posterior covariance matrix.
We bound the error in the resulting approximation of the root prior-preconditioned Hessian of the data misfit.
Based on this we also bound the errors of the approximate posterior covariance and mean.
Our analysis shows that this reduced-order model recovers the exact posterior when the rank of the reduced-order model is equal to the `intrinsic dimension' of the inverse problem, i.e. the rank of the prior-preconditioned Hessian.
Two numerical experiments from structural engineering illustrate the performance of our bounds.
\end{abstract}

\section{Introduction}
\label{section:introduction}
In the Bayesian approach to inverse problems, the parameter and the data are treated as random variables, where the unknown parameter is endowed with a prior distribution.
The inference takes the form of the Bayesian prior-to-posterior update, by integrating the likelihood against the prior distribution.
Sampling from or characterising the posterior requires evaluating the forward model, i.e. the function that maps parameters to measurements before they are contaminated by noise.
In many applications in science and engineering, evaluating the forward model involves evaluating a `parameter-to-state operator' or `solution operator' associated to the numerical solution of a partial differential equation (PDE); see e.g. ~\cite{stuartInverseProblemsBayesian2010b,cotterApproximationBayesianInverse2010a}.
Finer discretisations lead to higher-dimensional solution spaces, and thus to more expensive forward model evaluations, even if the forward model is linear.
This motivates the development of computationally cheaper approximate forward models.

An important insight is that Bayesian inverse problems often have some low-dimensional structure that can be exploited to reduce the computational cost of solving them.
For linear Gaussian inverse problems, i.e. Bayesian inverse problems where the forward model is linear and the prior and noise are Gaussian, this insight appears to have been first described in \cite{Flath2011} and mathematically analysed in \cite{Spantini2015}.
This low-dimensional structure can arise because of strongly concentrated priors, smoothing properties of the forward model, or measurements that are not informative due to noise contamination.
This low-dimensional structure is captured by the fact that the Bayesian prior-to-posterior update occurs mostly on a low-dimensional subspace of the parameter space, so that the posterior and prior do not differ significantly on the orthogonal complement of this subspace.
For linear Gaussian inverse problems, the low-dimensional structure is often characterised by the spectral decay of the so-called `prior-preconditioned Hessian', and the subspace or a suitable transformation of it is referred to as the `likelihood-informed subspace' (LIS).
In particular, in \cite{Spantini2015}, it was shown that by precomposing the forward model with a projector to the LIS, one can restrict the original, possibly high-dimensional inverse problem to an inverse problem on the LIS and thereby obtain an optimal approximation of the Gaussian posterior.
However, because this approach still requires the evaluation of the forward model and thus of the PDE solution operator, it does not address the problem of expensive forward model evaluations.

In \cite{Scheffels2025}, an approach is proposed to address this problem by replacing the solution operator with a reduced-order model that is constructed by Petrov-Galerkin projection of the PDE solution operator, where the test and trial bases of the Petrov-Galerkin projection are related to the LIS.
In this approach, the reduced-order model of the PDE solution operator is used instead of the original solution operator, in order to obtain an approximation of the forward model that is cheaper to evaluate than the original forward model.
Using the approximate forward model leads to an approximation of the exact posterior corresponding to the original forward model.
This raises the need for a theoretical analysis of the error of the approximate posterior.

The first contribution of this paper is to generalise some well-known results concerning optimal low-rank approximations of linear Gaussian inverse problems, from the original setting of invertible prior covariances in \cite{Spantini2015} to the setting of possibly singular prior covariances.
This generalisation is important, because in some applications such as structural engineering, it may be unnecessary or undesirable to account for uncertainty in every degree of freedom in the parameter space.
While the setting of possibly singular prior covariances has been studied before, e.g. in \cite{konigDimensionModelReduction2025,KoenigLie2025}, our analysis differs from the analysis we have found in the literature.

The main novel contribution of this paper is a theoretical analysis of the approximate posterior associated to the likelihood-informed reduced-order models of \cite{Scheffels2025}.
To achieve this, we analyse the root prior-preconditioned Hessian associated to the approximate forward model.
In particular, we show in \Creflem{lemma:decomposition_of_root_of_approximate_PPH} that the root prior-preconditioned Hessian associated to the approximate forward model agrees with the root prior-preconditioned Hessian associated to the \emph{exact} forward model, when both are post-composed with the orthogonal projection to certain subspaces in the data space.
In our main result, \Cref{theorem:error_decomposition_approximate_root_PPH}, we use the preceding result to prove upper bounds on the errors in $p$-Schatten norms of the root prior-preconditioned Hessian, which in turn yield upper bounds for the approximate posterior covariance and the approximate posterior mean associated to the approximate forward model.
This suffices to analyse the error of the approximate posterior, because the exact and approximate posteriors are Gaussian in our setting.
Finally, we provide two numerical examples of inverse problems from structural engineering, to illustrate the effectiveness of our bounds.

The first key insight of this work is that the reduced-order model proposed in \cite{Scheffels2025} correctly identifies the subspace on which the prior-to-posterior update occurs.
The second key insight is that the reduced-order model identifies the `intrinsic dimension' of the inverse problem, in the common setting where the dimension of the data is smaller than the dimension of the uncertain component of the parameter.
We quantify the dimension of the uncertain component by the rank of the prior covariance.
In particular, the approximate posterior that results from using the reduced-order model agrees with the exact posterior, when the dimension of the projection is equal to the dimension of the data.

\subsection{Related literature}
\label{subsection:related_literature}
Various surrogate modeling methods have been used in Bayesian inverse problems. Polynomial chaos expansions approximate the forward model in the span of a truncated set of cheap-to-evaluate basis functions~\cite{AdaptiveMultifidelityPCE2019,novakPhysicsinformedPolynomialChaos2024}.
However, the truncation may introduce errors due to the inexpressivity of the basis~\cite{LimitationsPolynomialChaos2015}.
Another approach constructs the surrogate using universal function approximators such as Gaussian processes and neural networks~\cite{dinkelSolvingBayesianInverse2023, villaniAdaptiveGaussianProcess2024, yanAdaptiveSurrogateModeling2020, deveneyDeepSurrogateApproach2021,pasparakisBayesianNeuralNetworks2025, pfortnerPhysicsInformedGaussianProcess2024}. These typically require a larger volume of training data to achieve accurate results, which may not be available for certain applications.

Projection-based model reduction constructs surrogates by projecting the differential equation that describes the inverse problem onto a low-dimensional subspace \cite{Antoulas2005, benner2015survey}. Common methods to construct the reduced model are proper orthogonal decomposition ~\cite{lumley1981coherent,sirovich1987turbulence, ghattasLearningPhysicsbasedModels2021a, nguyenModelOrderReduction2014, cuiDatadrivenModelReduction2015, xiongAcceleratingBayesianInference2021, raoInverseParameterEstimation2024} and reduced basis methods~\cite{hesthaven2022reduced,rozza2024short,chenSteinVariationalReduced2021,silvaReducedBasisEnsemble2023}. These methods approximate the system state in the span of principal components of available or generated snapshot data. The accuracy of the obtained reduced model depends on the choice of subspace onto which the equation is projected  \cite{eiermannGeometricAspectsTheory2001}. However, the reduced models do not consider the context of the inference and the low dimensionality of the transition from prior to posterior, but rather compare global accuracy while minimising the snapshot data that is used.

Some works have developed approximations for Bayesian inverse problems by exploiting their intrinsic low dimensionality due to sparse measurements and smoothing properties of the forward model. The posterior distribution is approximated by replacing the high-dimensional parameter space by the LIS~\cite{Spantini2015, Flath2011,bui-thanhExtremescaleUQBayesian2012, bui-thanhComputationalFrameworkInfiniteDimensional2013,cuiLikelihoodinformedDimensionReduction2014b,zahm2022certified}. For linear Gaussian inverse problems, the work \cite{Spantini2015} appears to be the first work to describe the optimality of the obtained approximations for finite-dimensional parameter spaces; this work has been recently generalised to Hilbert spaces \cite{CarereLie2025a,CarereLie2025b}.
Such methods can be considered `dimension reduction' methods, as they only restrict the parameter to the LIS and still require the evaluations of the high-dimensional forward model to evaluate the likelihood. These dimension reduction approaches have been further developed in the context of linear dynamical systems where the goal of the inference is the initial state~\cite{qianModelReductionLinearBalancing, konigDimensionModelReduction2025, konigTimeLimitedBalancedTruncation2023,stavrinidesEnsembleKalmanApproach2025, freitagInferenceOrientedBalancedTruncation2024a}.

In \cite{Scheffels2025}, the authors present a likelihood-informed model reduction method for the inference of static structural loads from linear observations of the state. The reduced model is obtained by Petrov-Galerkin projection onto the LIS, and numerical experiments showed accurate posterior approximation. However, no theoretical analysis of the method is given. In particular, while error assessments of the accuracy of the posterior approximations are performed for the experiments, no error bounds are given.

In \cite{sprungkLocalLipschitzStability2020a}, upper bounds for the error of approximate posteriors arising from approximations of the possibly nonlinear forward model are proven, in the setting where both the prior and the noise may be non-Gaussian.
Because the posterior is in general not Gaussian, statistical distances and divergences are used to quantify the error in the approximate posterior.
For forward models given as a composition of a linear state-to-observation map with a possibly nonlinear parameter-to-state map, \cite{Cvetkovic_etal_BayesOED_2024} proved error bounds with respect to the Kullback-Leibler divergence of approximate posteriors arising from approximations of the parameter-to-state map.
In \cite{KoenigLie2025}, a perturbation analysis is used in the context of linear Gaussian inverse problems, in order to obtain error bounds of the posterior mean and covariance approximations in terms of the error in the root prior-preconditioned Hessian. In addition, connections between such inverse problems and  system-theoretic model reduction methods based on balanced truncation are used to reformulate these error bounds in terms of Hankel singular values.

\subsection{Outline}
\label{subsection:outline}

After introducing the notation in \Cref{subsection:notation}, we describe the setting of our work in \Cref{section:setting}. We recall some facts from the theory of optimal low-rank approximations of linear Gaussian inverse problems in \Cref{subsection:optimal_low_rank_approximation}. Moreover, we extend some well-known properties of optimal low-rank approximations that were established in \cite{Spantini2015} for invertible prior covariances to the case of possibly singular prior covariances.
In \Cref{subsection:ROM}, we recall the construction of the reduced-order model that was defined in \cite{Scheffels2025}.
In \Cref{section:error_analysis}, we present the key theoretical results of this work.
We explore these theoretical results further using numerical experiments, which we describe in \Cref{section:numerical_experiments}.
We conclude in \Cref{section:conclusion} and provide detailed proofs in Appendix~\ref{section:proofs}.

\subsection{Notation}
\label{subsection:notation}

For $\mbf{A}\in\R^{d\times d}$, $d\in\N$, $\mbf{A}\succ\mbf{0}$ and $\mbf{A}\succeq \mbf{0}$ indicates that $\mbf{A}$ is symmetric positive definite (s.p.d.) and symmetric positive semidefinite (s.p.s.d.) respectively.
For $m,n\in\N$ and $\mbf{A}\in\R^{m\times n}$, we write $\mbf{A}^\top$, $\range{\mbf{A}}\coloneqq\{\mbf{A}\mbf{z}:\mbf{z}\in\R^n\}$ and $\rank{\mbf{A}}\coloneqq\mathdim{\range{\mbf{A}}}$ to denote the transpose, range and rank of $\mbf{A}$ respectively.
For $1\leq r\leq n$, $\mbf{A}_r\coloneqq [a_1|\cdots|a_r]\in\R^{m\times r}$ and $\mbf{A}_\bot\coloneqq[ a_{r+1}|\cdots|a_n]\in\R^{m\times (n-r)}$ denote the column submatrix of $\mbf{A}$ consisting of the leading $r$ columns and the trailing $n-r$ columns respectively.
If $r\geq n$, then we define $\mbf{A}_\bot\coloneqq \mbf{0}$, where the number of columns of $\mbf{0}$ may depend on the context.
For $1\leq p\leq \infty$ and $\mbf{x}\in\R^{m}$, $\norm{\mbf{x}}_p$ denotes the $\ell_p$ norm and $\norm{\mbf{A}}_p$ denotes the $p$-Schatten norm.
In particular, $\norm{\mbf{A}}_\infty$ denotes the spectral norm or operator norm $\norm{\mbf{A}}_\infty=\sup\{ \norm{\mbf{A}\mbf{x}}_2:\norm{\mbf{x}}_2=1\}$.
For $m\in\N$, we denote the $m\times m$ identity matrix by $\Id_m$.
For $a,b\in\R$, $a\wedge b\coloneqq \min\{a,b\}$.
We write $\text{diag}(s_i)_{i=1}^{n}$ to denote a possibly rectangular matrix whose diagonal entries are $(s_i)_{i=1}^{n}\in\R^n$ and $\mbf{x}\sim\mathcal{N}(\mbf{m},\mbf{C})$ to indicate that the random variable $\mbf{x}$ has the normal or Gaussian distribution with mean $\mbf{m}$ and covariance $\mbf{C}$.

\section{Setting}
\label{section:setting}

\begin{assumption}
\label{assumption_main}
{\rm The inverse problem is given by the observation model}
\begin{subequations}
\label{eq:inverse_problem}
\begin{align}
    \dataRV &=\fwdOper \unknown+\noise, \quad \text{s.t. }\,  \fullModel\state=\unknown,
    \label{eq:observation_model}
    \\
   \fwdOper &=\ObsOper\fullModel^{-1}.
   \label{eq:forward_operator}
\end{align}
\end{subequations}
{\rm In \eqref{eq:observation_model}, the $\R^m$-valued data random variable $\dataRV$ is the image under the forward model $\fwdOper\in\R^{m\times d}$ of the $\R^d$-valued unknown parameter $\unknown\sim \mathcal{N}(\PrMn,\PrCov)$ for some $\PrCov\succeq \mbf{0}$, contaminated by $\R^m$-valued noise $\noise\sim\mathcal{N}(\mbf{0},\ObsCov)$ for $\ObsCov\succ \mbf{0}$.
The equation $\fullModel\state=\unknown$ relates the unknown $\unknown$ to the corresponding `state' variable $\state$, by the invertible linear operator $\fullModel\in\R^{d\times d}$.
In \eqref{eq:forward_operator}, the forward model $\fwdOper$ is expressed as the composition of a linear state-to-observation operator $\ObsOper \in\R^{m\times d}$ with the linear parameter-to-state or solution operator $\fullModel^{-1}$.}
\end{assumption}

In \Crefass{assumption_main}, the prior covariance $\PrCov$ is s.p.s.d. but not necessarily s.p.d.. Note, however, that the observation noise covariance must be s.p.d..

The structural assumption \eqref{eq:forward_operator} on $\fwdOper$ often arises in the context of inverse problems based on differential equations with uncertain parameters: $\state=\fullModel^{-1}\unknown$ encodes the numerical solution $\state$ of a discretised boundary or initial value problem corresponding to the parameter value $\unknown$, and $\ObsOper$ encodes the measurement of the solution at finitely many locations in the spatial or spatiotemporal domain associated to the differential equation.
See e.g. \cite{Cvetkovic_etal_BayesOED_2024,Scheffels2025,konigDimensionModelReduction2025} for recent work where this structural assumption on $\fwdOper$ arises.

Given a realisation $\data$ of the data random variable $\dataRV$ in \eqref{eq:inverse_problem}, the exact posterior mean and covariance corresponding to the reference or `exact' forward model $\fwdOper$ are defined by
\begin{subequations}
\label{eq:posterior}
    \begin{align}
    \PosMn(\data) &=\PrMn+\PrCov \fwdOper^\top(\fwdOper\PrCov\fwdOper^\top+\ObsCov)^{-1}(\data-\fwdOper\PrMn)\in \R^{d},
    \label{eq:pos_mean}
    \\
    \PosCov &= \PrCov - \PrCov \fwdOper^\top(\fwdOper\PrCov \fwdOper^\top+\ObsCov+)^{-1}\fwdOper\PrCov\in \R^{d \times d}.
    \label{eq:pos_covariance}
\end{align}
\end{subequations}
By \Crefass{assumption_main}, the Fisher information matrix or the Hessian of the data misfit given by the observation model \eqref{eq:inverse_problem} is $\fwdOper^\top \ObsCov^{-1}\fwdOper$.
We shall henceforth use `Hessian' to refer to the Hessian of the data misfit.
The prior-preconditioned Hessian is given by $\PrCov^{1/2}\fwdOper^\top \ObsCov^{-1}\fwdOper\PrCov^{1/2}\in\R^{d\times d}$, where $\PrCov^{1/2}$ denotes the symmetric square root of $\PrCov$; see \cite[p. 413]{Flath2011}.
If $\fwdOper$ is replaced by an approximation $\hatFwdOper\in\R^{m\times d}$, then the resulting approximations of the posterior mean and covariance are given by
\begin{subequations}
\label{eq:approx_posterior}
\begin{align}
    \hatPosMn(\data) &= \PrMn+\PrCov \widehat{\fwdOper}^\top(\widehat{\fwdOper}\PrCov\widehat{\fwdOper}^\top+\ObsCov)^{-1}(\data-\widehat{\fwdOper}\PrMn)
    \label{eq:pos_mean_approx}
    \\
    \hatPosCov &= \PrCov - \PrCov \widehat{\fwdOper}^\top(\widehat{\fwdOper}\PrCov \widehat{\fwdOper}^\top+\ObsCov)^{-1}\widehat{\fwdOper}\PrCov.
    \label{eq:pos_covariance_approx}
\end{align}
\end{subequations}
The corresponding approximate Hessian and its prior-preconditioned version are given by $\widehat{\fwdOper}^\top \ObsCov^{-1}\widehat{\fwdOper}$ and $\PrCov^{1/2}\widehat{\fwdOper}^\top \ObsCov^{-1}\widehat{\fwdOper}\PrCov^{1/2}$.

\subsection{Optimal low-rank approximation}
\label{subsection:optimal_low_rank_approximation}

We recall some key results about the optimal low-rank approximations of linear Gaussian inverse problems.
Let $\rootPrCov\in\R^{d\times n}$ and $\rootObsCov\in\R^{m\times m}$ be possibly nonsymmetric matrices such that $\PrCov=\rootPrCov\rootPrCov^\top$ and $\ObsCov=\rootObsCov\rootObsCov^\top$.
While $\rootPrCov$ may be a rectangular matrix if $d\neq n$, $\rootObsCov$ must be a square matrix.
The dimension $n$ corresponds to $\rank{\PrCov}$, which satisfies $n\leq d$ since $\PrCov$ is the covariance matrix of a $\R^d$-valued random variable.

We now consider the prior-preconditioned Hessians $\rootPrCov^\top\fwdOper^\top \ObsCov^{-1}\fwdOper\rootPrCov$ and $\rootPrCov^\top \widehat{\fwdOper}^\top \ObsCov^{-1}\widehat{\fwdOper}\rootPrCov$, where we note that we use the possibly nonsymmetric square root $\rootPrCov$ instead of the symmetric square root $\PrCov^{1/2}$.
Now consider the singular value decomposition (SVD) of the root $\rootObsCov^{-1} \fwdOper \rootPrCov$ of the prior-preconditioned Hessian:
\begin{equation}
\label{eq:PPH_SVD}
\begin{aligned}
    &\rootObsCov^{-1} \fwdOper \rootPrCov=\sum_{i=1}^{m\wedge n} \SgVal_i \lSgVec_i \rSgVec_i ^\top,\qquad \rankPPH\coloneqq \rank{\rootObsCov^{-1} \fwdOper \rootPrCov},
    \\
    &\lSgMat\coloneqq[\lSgVec_1|\cdots|\lSgVec_m]\in\R^{m\times m},\quad  \SgValMat\coloneqq \text{diag}(\SgVal_i)_{i=1}\in\R^{m\times n},\quad \rSgMat\coloneqq [\rSgVec_1|\cdots|\rSgVec_d]\in\R^{n\times n},
    \end{aligned}
\end{equation}
where the singular values are in decreasing order, i.e. $\SgVal_i\geq \SgVal_{i+1}$ for every $i$, and where $\lSgMat$ and $\rSgMat$ are orthogonal matrices.
Note that $\rankPPH\leq  m\wedge n$.

In the following result, we recall some key properties of optimal low-rank approximations from the literature. Below,
\begin{equation*}
 \KLD(\mu\Vert \nu)\coloneqq \begin{cases}
                              \int \log\frac{\mathrm{d}\mu}{\mathrm{d}\nu}\mathrm{d}\mu, & \mu \ll\nu,
                              \\
                              +\infty, & \text{otherwise},
                             \end{cases}
\end{equation*}
denotes the Kullback-Leibler divergence of $\mu$ with respect to $\nu$, for any two probability measures $\mu$ and $\nu$ defined on a common measurable space.

\begin{proposition}[Optimal low-rank approximations for invertible prior covariances]
\label{proposition:optimal_low_rank_approximation}
Suppose that \Crefass{assumption_main} holds, $\PrMn=\mbf{0}$, $\rootPrCov\in\R^{d\times d}$, $d=n$ in \eqref{eq:PPH_SVD}, and that $\PrCov$ is invertible.
Define
\begin{subequations}
\begin{align}
   \label{eq:hat_eigenvectors_and_tilde_eigenvectors}
 \widehat{\bm{w}}_i\coloneqq & \rootPrCov \rSgVec_i, & &\widetilde{\bm{w}}_i\coloneqq \PrCov^{-1} \widehat{\bm{w}}_i, & & 1\leq i\leq d,
 \\
 \label{eq:OLR_approximate_forward_operator_and_optimal_projection}
\hatFwdOper^\dagger(r)\coloneqq &\fwdOper\mbf{P}^\dagger_r,& & \mbf{P}^\dagger_r\coloneqq \PrCov^{1/2} \rSgMat_r  (\PrCov^{-1/2}\rSgMat_r)^\top,& & 1\leq r\leq \rankPPH.
\end{align}
\end{subequations}
\begin{enumerate}
 \item \label{item_variance_reduction}
For $1\leq i\leq d$, the variance of the posterior marginal along $\text{span}(\widetilde{\bm{w}}_i)$ divided by the variance of the prior marginal along $\text{span}(\widetilde{\bm{w}}_i)$ is exactly $(1+\SgVal_i^2)^{-1}$, for the singular value $\SgVal_i$ in \eqref{eq:PPH_SVD}.
\item \label{item_oblique_projector}
Let $\hatPosMn^\dagger(\data)$ and $\hatPosCov^\dagger$ be the corresponding posterior mean and posterior covariance obtained by replacing $\hatFwdOper$ in \eqref{eq:approx_posterior} with $\hatFwdOper^\dagger(r)$ from \eqref{eq:OLR_approximate_forward_operator_and_optimal_projection}.
Define
\begin{equation*}
 \mathscr{A}_r\coloneqq \{\mbf{A}\in\R^{d\times m}:\rank{\mbf{A}}\leq r\},\quad \mathscr{C}_r\coloneqq\{\mbf{C}=\PrCov-\mbf{K}\mbf{K}^\top,\rank{\mbf{K}}\leq r \}.
\end{equation*}
Then there exists $\mbf{A}^\dagger\in\mathscr{A}_r$ such that $\hatPosMn^\dagger(\data)=\mbf{A}^\dagger \data$ for every $\data\in\R^m$, and
\begin{equation*}
(\mbf{A}^\dagger,\hatPosCov^\dagger)=\argmin_{\mbf{A}\in\mathscr{A}_r,\mbf{C}\in\mathscr{C}_r} \mathbb{E}[\KLD(\mathcal{N}(\mbf{A}\dataRV,\mbf{C})\Vert \mathcal{N}(\PosMn(\data),\PosCov))].
\end{equation*}
\end{enumerate}
\end{proposition}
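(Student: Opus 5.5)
This result is a recollection of \cite{Spantini2015}, so the plan is to reduce everything to the whitened coordinates $\mbf{z}=\rootPrCov^{-1}\unknown\sim\mathcal{N}(\mbf{0},\Id_d)$. Write $\mbf{H}\coloneqq\rootObsCov^{-1}\fwdOper\rootPrCov=\lSgMat\SgValMat\rSgMat^\top$. By the Woodbury identity, \eqref{eq:pos_covariance} becomes $\PosCov=\rootPrCov(\Id_d+\mbf{H}^\top\mbf{H})^{-1}\rootPrCov^\top=\rootPrCov\rSgMat(\Id_d+\SgValMat^\top\SgValMat)^{-1}\rSgMat^\top\rootPrCov^\top$. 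The same argument with $\fwdOper$ replaced by $\fwdOper\mbf{P}$ gives the approximate posterior. We use $\rootPrCov$ here; the statement mixes $\PrCov^{1/2}$ and $\rootPrCov$, and I will read $\mbf{P}^\dagger_r$ as $\rootPrCov\rSgMat_r(\rootPrCov^{-1}\rSgMat_r)^\top$, i.e.\ $\mbf{P}_r^\dagger=\rootPrCov\rSgMat_r\rSgMat_r^\top\rootPrCov^{-1}$. This is an oblique projector. Its action in whitened coordinates is the orthogonal projector $\rSgMat_r\rSgMat_r^\top$.

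\textbf{Item (i).} The prior variance of the marginal along $\widetilde{\bm w}_i$ is $\widetilde{\bm w}_i^\top\PrCov\widetilde{\bm w}_i$. Since $\widetilde{\bm w}_i=\rootPrCov^{-\top}\rSgVec_i$, this equals $\rSgVec_i^\top\rSgVec_i=1$. For the posterior, the covariance formula above gives $\widetilde{\bm w}_i^\top\PosCov\widetilde{\bm w}_i=\rSgVec_i^\top\rSgMat(\Id+\SgValMat^\top\SgValMat)^{-1}\rSgMat^\top\rSgVec_i=(1+\SgVal_i^2)^{-1}$. The ratio is therefore as claimed, with $\SgVal_i\coloneqq0$ for $i>m$.

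\textbf{Item (ii).} Replace $\fwdOper$ by $\fwdOper\mbf{P}_r^\dagger$. The whitened Hessian root then becomes $\mbf{H}\rSgMat_r\rSgMat_r^\top=\sum_{i\le r}\SgVal_i\lSgVec_i\rSgVec_i^\top$, which is the truncated SVD. Consequently
\[
\hatPosCov^\dagger=\PrCov-\sum_{i\le r}\tfrac{\SgVal_i^2}{1+\SgVal_i^2}\widehat{\bm w}_i\widehat{\bm w}_i^\top ,
\]
and this lies in $\mathscr{C}_r$. The mean is $\hatPosMn^\dagger(\data)=\mbf{A}^\dagger\data$ with
\[
\mbf{A}^\dagger=\rootPrCov\Big(\sum_{i\le r}\tfrac{\SgVal_i}{1+\SgVal_i^2}\rSgVec_i\lSgVec_i^\top\Big)\rootObsCov^{-1},
\]
which has rank at most $r$. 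Next we decompose the expected KL divergence between Gaussians into a covariance part and a mean part. The covariance part is $\tfrac12[\mathrm{tr}(\PosCov^{-1}\mbf{C})-\log\det(\PosCov^{-1}\mbf{C})-d]$ and does not involve $\mbf{A}$. The mean part is $\tfrac12\mathbb{E}\norm{\PosCov^{-1/2}(\mbf{A}\dataRV-\PosMn(\dataRV))}_2^2$ and does not involve $\mbf{C}$. The joint minimisation therefore splits into two separate problems.

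The covariance problem is the Förstner-metric/KL optimality of \cite[Thm.~2.3]{Spantini2015}. In whitened coordinates it becomes minimising $\sum_j[\lambda_j-\log\lambda_j-1]$ over the eigenvalues $\lambda_j$ of a rank-$r$ negative update of $\Id$ relative to $(\Id+\mbf{H}^\top\mbf{H})^{-1}$. That is a generalized eigenvalue problem, and it is solved by keeping the top $r$ eigenpairs, since $x\mapsto x-\log x-1$ is increasing in the relevant range. The mean problem is a reduced-rank regression, weighted by $\PosCov^{-1}$ and taken under the marginal law of $\dataRV$ with covariance $\fwdOper\PrCov\fwdOper^\top+\ObsCov$. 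Whitening both sides, it becomes $\min_{\rank\le r}\norm{\mbf{B}-\mbf{M}}_2^2$, i.e.\ a Frobenius norm, for an explicit $\mbf{M}$ whose SVD is built from $\lSgVec_i,\rSgVec_i$ with singular values $\SgVal_i/\sqrt{1+\SgVal_i^2}$. Eckart--Young then gives $\mbf{A}^\dagger$, as in \cite[Thm.~4.1]{Spantini2015}.

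The main obstacle is this mean step. One must check that the whitening transformations turn the weighted expectation into an unweighted Frobenius problem whose optimal truncation coincides exactly with the projected-model mean $\mbf{A}^\dagger$. This requires careful bookkeeping of $\PosCov^{-1/2}$ against $\rootPrCov$ and of the data covariance against $\lSgMat$. Here I would rely on the cited result rather than rederive it.
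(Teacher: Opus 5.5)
Your proposal is correct in substance, but it argues differently from the paper. The paper's proof is purely by citation: item (i) is quoted from \cite[p.~A2458]{Spantini2015} and \cite[Proposition~3.6]{CarereLie2025b}. Item (ii) as a whole comes from \cite[Proposition~7.1]{CarereLie2025b}, including the fact that the projected model $\fwdOper\mbf{P}^\dagger_r$ realises the jointly KL-optimal pair.

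You instead do the following:
\begin{enumerate}
\item You verify (i) directly from $\PosCov=\rootPrCov(\Id_d+\mbf{H}^\top\mbf{H})^{-1}\rootPrCov^\top$ and $\widetilde{\bm w}_i=\rootPrCov^{-\top}\rSgVec_i$.
\item You compute the projected posterior explicitly; your formulas for $\hatPosCov^\dagger$ and $\mbf{A}^\dagger$ are right.
\item You split the Gaussian KL divergence additively into a $\mbf{C}$-only term and an $\mbf{A}$-only term, so the joint minimisation becomes two independent ones.
\item You then invoke the separate optimality results \cite[Theorems~2.3 and 4.1]{Spantini2015}.
\end{enumerate}
The bookkeeping you worry about in the mean step is harmless. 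The risk against $\PosMn(\dataRV)$ used here and the Bayes risk against $\unknown$ used by Spantini et al.\ differ only by an $\mbf{A}$-independent constant, by orthogonality of the posterior-mean error.

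Your route buys transparency: it shows that joint optimality is just separate optimality, and it makes explicit the link between the oblique projector and the optimal pair that the paper outsources. The paper's route buys brevity, and its statement rests on a result valid in Hilbert spaces.

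There are two small slips.
\begin{enumerate}
\item $\rootPrCov\rSgMat_r(\rootPrCov^{-1}\rSgMat_r)^\top$ equals $\rootPrCov\rSgMat_r\rSgMat_r^\top\rootPrCov^{-\top}$, not the form you go on to use. Your working form $\rootPrCov\rSgMat_r\rSgMat_r^\top\rootPrCov^{-1}$ is $\rootPrCov\rSgMat_r(\rootPrCov^{-\top}\rSgMat_r)^\top$. This is the right reading of $\mbf{P}^\dagger_r$, and it matches the last statement of \Crefprop{proposition:OLR_approximate_forward_operator_general_prior_covariance}.
\item In the mean problem, weight by $\PosCov^{-1}=\rootPrCov^{-\top}(\Id_d+\mbf{H}^\top\mbf{H})\rootPrCov^{-1}$ and whiten the data covariance $\rootObsCov(\Id_m+\mbf{H}\mbf{H}^\top)\rootObsCov^\top$. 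The target then becomes $(\Id_d+\mbf{H}^\top\mbf{H})^{1/2}\mbf{H}^\top(\Id_m+\mbf{H}\mbf{H}^\top)^{-1/2}=\mbf{H}^\top$, up to an orthogonal factor. Its singular values are therefore $\SgVal_i$, not $\SgVal_i(1+\SgVal_i^2)^{-1/2}$, and the minimal excess risk is $\sum_{i>r}\SgVal_i^2$. Truncating this target and un-whitening still returns exactly your $\mbf{A}^\dagger$, so your conclusion is unaffected.
\end{enumerate}
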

\begin{proof}
~The definitions in \eqref{eq:hat_eigenvectors_and_tilde_eigenvectors} follow from Remark 4 and equation (3.2) in \cite{Spantini2015}.

\Cref{item_variance_reduction}: See \cite[p. A2458]{Spantini2015} for the first statement of this result and \cite[Proposition 3.6]{CarereLie2025b} for a more recent formulation.

 \Cref{item_oblique_projector}: This follows from \cite[Proposition 7.1]{CarereLie2025b}.
\end{proof}

By \eqref{eq:PPH_SVD}, $\delta_i>0$ for $i\leq \rankPPH$ and $\delta_i=0$ for $i>\rankPPH$. Thus, \Crefprop{proposition:optimal_low_rank_approximation}\ref{item_variance_reduction} implies that for $i\leq \rankPPH$, the variance of the posterior marginal along $\text{span}(\tilde{\bm{w}}_i)$ is strictly less than the variance of the prior marginal along $\text{span}(\tilde{\bm{w}}_i)$, and that the marginal variances are equal for $i>\rankPPH$.
This implies that $\text{span}((\widetilde{\mbf{w}}_i)_{i=1}^{r})$ is the $r$-dimensional subspace on which the posterior covariance differs most from the prior covariance, and that the posterior covariance differs from the prior covariance only on a subspace of dimension $\rankPPH$.
The `likelihood-informed subspace' is given by $\PrCov(\text{span}((\widetilde{\mbf{w}}_i)_{i=1}^{r}))$; see the discussion after \cite[Proposition 3.6]{CarereLie2025b}.
Note that $\PrCov(\text{span}((\widetilde{\mbf{w}}_i)_{i=1}^{r}))=\text{span}((\widehat{\bm{w}}_i)_{i=1}^{r})$, by \eqref{eq:hat_eigenvectors_and_tilde_eigenvectors}.

\Crefprop{proposition:optimal_low_rank_approximation}\ref{item_oblique_projector} shows that $\hatFwdOper^\dagger(r)$ yields an optimal rank-$r$ approximation of $\fwdOper$, in the sense that a corresponding approximate Gaussian posterior yields the smallest average Kullback--Leibler divergence with respect to the exact Gaussian posterior, over a class $\mathscr{A}_r$ of linear rank-$r$ data-to-posterior mean maps and a class $\mathscr{C}_r$ of symmetric negative updates by rank-$r$ matrices of the prior covariance $\PrCov$. The matrix $\hatFwdOper^\dagger(r)$ is obtained by precomposing $\fwdOper$ with $\mbf{P}^\dagger_r$, which is a projector of at most rank $r$ that is in general oblique with respect to the standard inner product. The projector $\mbf{P}^\dagger_r$ can be understood as being an optimal rank-$r$ projector in the sense of minimising the average Kullback--Leibler divergence, as described above.
A related statement concerning the projector $\mbf{P}^\dagger$ is given in \cite[Corollary 3.2]{Spantini2015}, which states that $\mbf{P}^\dagger$ yields optimal approximations of \emph{only} the posterior covariance in the class $\mathscr{C}_r$.

We shall consider low-rank approximations in a more general context than that of \Crefprop{proposition:optimal_low_rank_approximation}, by requiring only that \Crefass{assumption_main} holds.
Our results thus apply also to singular prior covariances.
Our motivation for considering this more general context comes from inverse problems for static structural loads, where reasonable prior covariances are often rank-deficient because uncertainty does not occur in every degree of freedom of the system; see e.g. \cite[Section 3]{Scheffels2025} and \cite{konigDimensionModelReduction2025}.

Recall from \eqref{eq:PPH_SVD} that $\rSgVec_i\in\R^n$ for $i=1,\ldots,n$, and that $\rankPPH\leq n$.
For $\rootPrCov\in\R^{d\times n}$ satisfying $\PrCov=\rootPrCov\rootPrCov^\top$, define
\begin{equation}
    \label{eq:hat_eigenvectors_and_tilde_eigenvectors_general_prior_covariances}
 \widehat{\bm{w}}_i\coloneqq  \rootPrCov \rSgVec_i, \quad \widetilde{\bm{w}}_i\coloneqq \fwdOper^\top\rootObsCov^{-\top}\lSgVec_i\SgVal_i^{-1},\quad 1\leq i\leq \rankPPH.
\end{equation}
The differences between the definitions in \eqref{eq:hat_eigenvectors_and_tilde_eigenvectors} and the definitions in \eqref{eq:hat_eigenvectors_and_tilde_eigenvectors_general_prior_covariances} consist in the definitions of $\widetilde{\bm{w}}_i$ and the range of the index $i$.
The following result shows that the definitions in \eqref{eq:hat_eigenvectors_and_tilde_eigenvectors_general_prior_covariances} recover the definitions in \eqref{eq:hat_eigenvectors_and_tilde_eigenvectors} if $\PrCov$ is invertible.
Thus, for the common range of indices $1\leq r\leq \rankPPH$, the definitions \eqref{eq:hat_eigenvectors_and_tilde_eigenvectors_general_prior_covariances} are more generally applicable than the definitions in \eqref{eq:hat_eigenvectors_and_tilde_eigenvectors}.

\begin{lemma}
 \label{lemma:tilde_w_j_definitions_consistent}
 Suppose \Crefass{assumption_main} holds and that $\PrCov$ is invertible.
 Let $d=n$ in \eqref{eq:PPH_SVD}, let $\rootPrCov\in\R^{d\times d}$ satisfy $\PrCov=\rootPrCov\rootPrCov^\top$, and let $1\leq i\leq \rankPPH$.
 If $\widehat{\bm{w}}_i =\rootPrCov \rSgVec_i$, then $\PrCov^{-1} \widehat{\bm{w}}_i=\fwdOper^\top \rootObsCov^{-1}\lSgVec_i\SgVal_i^{-1}$.
\end{lemma}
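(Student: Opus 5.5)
The plan is a direct computation that combines the SVD relations in \eqref{eq:PPH_SVD} with the invertibility of $\rootPrCov$. Since $d=n$ and $\PrCov=\rootPrCov\rootPrCov^\top$ is invertible, $\rootPrCov\in\R^{d\times d}$ is itself invertible. Hence
\begin{equation*}
\PrCov^{-1}\rootPrCov=\rootPrCov^{-\top}\rootPrCov^{-1}\rootPrCov=\rootPrCov^{-\top},
\end{equation*}
and so $\PrCov^{-1}\widehat{\bm{w}}_i=\rootPrCov^{-\top}\rSgVec_i$. The task therefore reduces to expressing $\rSgVec_i$ through the left singular vector $\lSgVec_i$.

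Next I will use the standard SVD identity for the transposed matrix. From $\rootObsCov^{-1}\fwdOper\rootPrCov=\sum_j\SgVal_j\lSgVec_j\rSgVec_j^\top$ and the orthonormality of the $\lSgVec_j$, we get
\begin{equation*}
(\rootObsCov^{-1}\fwdOper\rootPrCov)^\top\lSgVec_i=\rootPrCov^\top\fwdOper^\top\rootObsCov^{-\top}\lSgVec_i=\SgVal_i\rSgVec_i .
\end{equation*}
For $1\leq i\leq\rankPPH$ we have $\SgVal_i>0$, so we may divide by $\SgVal_i$. This gives $\rSgVec_i=\rootPrCov^\top\fwdOper^\top\rootObsCov^{-\top}\lSgVec_i\SgVal_i^{-1}$. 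Applying $\rootPrCov^{-\top}$ to both sides then yields
\begin{equation*}
\PrCov^{-1}\widehat{\bm{w}}_i=\fwdOper^\top\rootObsCov^{-\top}\lSgVec_i\SgVal_i^{-1},
\end{equation*}
which is exactly $\widetilde{\bm{w}}_i$ as defined in \eqref{eq:hat_eigenvectors_and_tilde_eigenvectors_general_prior_covariances}.

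The only delicate point is notational. The computation naturally produces $\rootObsCov^{-\top}$, which matches \eqref{eq:hat_eigenvectors_and_tilde_eigenvectors_general_prior_covariances}, whereas the lemma as displayed writes $\rootObsCov^{-1}$. These agree when $\rootObsCov$ is chosen symmetric, for example $\rootObsCov=\ObsCov^{1/2}$. In the write-up I will derive the $\rootObsCov^{-\top}$ form, since that is the one consistent with \eqref{eq:hat_eigenvectors_and_tilde_eigenvectors_general_prior_covariances}, and remark that it coincides with the displayed form for a symmetric root. Beyond this, the only nontrivial ingredients are the invertibility of $\rootPrCov$, which follows from $\det\PrCov=(\det\rootPrCov)^2\neq0$, and the restriction $i\leq\rankPPH$, which guarantees $\SgVal_i\neq 0$.
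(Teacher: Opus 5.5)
Your proof is correct and is essentially the paper's argument. Both use the invertibility of $\rootPrCov$ to reduce to $\PrCov^{-1}\widehat{\bm{w}}_i=\rootPrCov^{-\top}\rSgVec_i$ and then the SVD to trade $\rSgVec_i$ for $\lSgVec_i$; the paper gets there via the eigen-relation $(\rootObsCov^{-1}\fwdOper\rootPrCov)^\top(\rootObsCov^{-1}\fwdOper\rootPrCov)\rSgVec_i=\SgVal_i^2\rSgVec_i$ and then $\rootObsCov^{-1}\fwdOper\rootPrCov\rSgVec_i=\SgVal_i\lSgVec_i$, while you use $(\rootObsCov^{-1}\fwdOper\rootPrCov)^\top\lSgVec_i=\SgVal_i\rSgVec_i$ in one step, and your $\rootObsCov^{-\top}$ remark is right: the paper's own computation ends with $\fwdOper^\top\rootObsCov^{-\top}\lSgVec_i\SgVal_i^{-1}$, consistent with \eqref{eq:hat_eigenvectors_and_tilde_eigenvectors_general_prior_covariances}, so the $\rootObsCov^{-1}$ in the displayed statement is accurate only for a symmetric root $\rootObsCov$.
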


For the proof of \Creflem{lemma:tilde_w_j_definitions_consistent}, see Appendix~\ref{subsection:proofs_OLR}.

Unless otherwise specified, we shall use \eqref{eq:hat_eigenvectors_and_tilde_eigenvectors_general_prior_covariances} for the definitions of $(\widehat{\bm{w}}_i)_{i\leq \rankPPH}$ and $(\widetilde{\bm{w}}_i)_{i\leq \rankPPH}$ below.
For $1\leq r\leq \rankPPH$, define
\begin{subequations}
\label{eq:testMat_and_trialMat}
 \begin{align}
    \testMat_r&\coloneqq \rootPrCov\rSgMat_r=[\widehat{\bm{w}}_1|\cdots|\widehat{\bm{w}}_r]\in \R^{d\times r}
    \label{eq:testMat}
    \\
    \trialMat_r&\coloneqq \fwdOper^\top\rootObsCov^{-\top}\lSgMat_r(\SgValMat_r^\top\SgValMat_r)^{-1/2}=[\widetilde{\bm{w}}_1|\cdots|\widetilde{\bm{w}}_r]\in \R^{d\times r}.
    \label{eq:trialMat}
\end{align}
\end{subequations}

\begin{remark}
\label{remark:definitions_of_testMat_and_trialMat}
The definition of $\widehat{\bm{w}}_i$ in \eqref{eq:hat_eigenvectors_and_tilde_eigenvectors_general_prior_covariances} is identical to the definition of $\bm{v}_i$ above \cite[eq. (2.20)]{Scheffels2025}.
The definition of $\widetilde{\bm{w}}_i$ in \eqref{eq:hat_eigenvectors_and_tilde_eigenvectors_general_prior_covariances} is identical to the definition of $\bm{w}_i$ above \cite[eq. (2.20)]{Scheffels2025}.
Thus, the definition of $\testMat_r$ in \eqref{eq:testMat} and $\trialMat_r$ in \eqref{eq:trialMat} are identical to that given in \cite[eq. (2.21)]{Scheffels2025} and \cite[eq. (2.22)]{Scheffels2025} respectively.
\end{remark}

In preparation for the next result, we define for every $ 1\leq p\leq \infty$
\begin{equation}
 \label{eq:sum_trailing_singular_values}
 t(p,r)\coloneqq \begin{cases}
 \norm{(\SgVal_i)_{i\geq r+1}}_p & 1\leq r<\rankPPH
 \\
 0 & r\geq \rankPPH,
 \end{cases}
\end{equation}
the $\ell_p$-norm of the vector of trailing singular values of $\rootObsCov^{-1}\fwdOper\rootPrCov$.
\begin{proposition}[Optimal low-rank approximations for general prior covariances]
 \label{proposition:OLR_approximate_forward_operator_general_prior_covariance}
 Suppose \Crefass{assumption_main} holds, and let $(\widehat{\bm{w}}_i)_{i\leq \rankPPH}$ and $(\widetilde{\bm{w}}_i)_{i\leq \rankPPH}$ be as in \eqref{eq:hat_eigenvectors_and_tilde_eigenvectors_general_prior_covariances}.
Let $1\leq p\leq\infty$.
For $1\leq r\leq \rankPPH$, let
\begin{equation}
 \label{eq:OLR_approximate_forward_operator_general_prior_covariance}
  \hatFwdOperOLR(r)\coloneqq \fwdOper \mbf{P}_r,\quad \mbf{P}_r\coloneqq \testMat_r\trialMat_r^\top.
 \end{equation}
Then
\begin{align}
\rootObsCov^{-1}\hatFwdOperOLR(r)\rootPrCov=&\rootObsCov^{-1}\fwdOper  \rootPrCov \rSgMat_r  \rSgMat_r^\top
 \label{eq:OLR_approximate_forward_operator_general_prior_covariance_PPH}
 \\
     \Norm{\rootObsCov^{-1}(\fwdOper-\hatFwdOperOLR(r))\rootPrCov}_p=& t(p,r).
     \label{eq:OLR_errror_in_PPH}
 \end{align}
If in addition $\PrMn\in\range \rootPrCov$, then the approximate posterior covariance $\hatPosCovOLR(r)$ and approximate posterior mean $\hatPosMnOLR$ obtained by replacing $\hatFwdOper$ in \eqref{eq:approx_posterior} with $\hatFwdOperOLR(r)$ satisfy, for any realisation $\data$ of the data random variable $\dataRV$ from \eqref{eq:observation_model},
\begin{equation}
\label{eq:OLR_POS_Bound}
    \Norm{\PosCov-\hatPosCovOLR}_p\le C_1t(p,r),\qquad
    \Norm{\PosMn(\data)-\hatPosMnOLR(\data)}_2\le C_2 t(\infty,r),
\end{equation}
where $C_1=C_1(\rootPrCov,\rootObsCov,\fwdOper,\hatFwdOperOLR(r))$ and $C_2=C_2(C_1,\data,\PrMn,\rootPrCov,\rootObsCov,\fwdOper,\hatFwdOperOLR(r))$ are finite and do not depend on $p$.
Finally, if $\PrCov$ is invertible and $\rootPrCov\in\R^{d\times d}$ satisfies $\PrCov=\rootPrCov\rootPrCov^\top$, then $\testMat_r\trialMat_r^\top=\rootPrCov \rSgMat_r (\rootPrCov^{-\top} \rSgMat_r)^\top$.
\end{proposition}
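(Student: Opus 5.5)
The proof splits into three parts: the identity \eqref{eq:OLR_approximate_forward_operator_general_prior_covariance_PPH}, the error formula \eqref{eq:OLR_errror_in_PPH}, and the posterior bounds \eqref{eq:OLR_POS_Bound}. The last claim about invertible $\PrCov$ is a short consequence of \Creflem{lemma:tilde_w_j_definitions_consistent}.

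For \eqref{eq:OLR_approximate_forward_operator_general_prior_covariance_PPH}, write $\mbf{M}\coloneqq\rootObsCov^{-1}\fwdOper\rootPrCov=\lSgMat\SgValMat\rSgMat^\top$. By \eqref{eq:trialMat}, $\trialMat_r^\top\rootPrCov=(\SgValMat_r^\top\SgValMat_r)^{-1/2}\lSgMat_r^\top\rootObsCov^{-1}\fwdOper\rootPrCov=(\SgValMat_r^\top\SgValMat_r)^{-1/2}\lSgMat_r^\top\mbf{M}$. Here $\SgValMat_r^\top\SgValMat_r=\text{diag}(\SgVal_i^2)_{i\le r}$ is invertible because $r\le\rankPPH$. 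Since $\lSgMat_r^\top\mbf{M}=\text{diag}(\SgVal_i)_{i\le r}\rSgMat_r^\top$, this gives $\trialMat_r^\top\rootPrCov=\rSgMat_r^\top$. Therefore
\[
\rootObsCov^{-1}\fwdOper\mbf{P}_r\rootPrCov=\rootObsCov^{-1}\fwdOper\testMat_r\trialMat_r^\top\rootPrCov=\rootObsCov^{-1}\fwdOper\rootPrCov\rSgMat_r\rSgMat_r^\top ,
\]
which is \eqref{eq:OLR_approximate_forward_operator_general_prior_covariance_PPH}. Then $\mbf{M}-\mbf{M}\rSgMat_r\rSgMat_r^\top=\sum_{i>r}\SgVal_i\lSgVec_i\rSgVec_i^\top$, which is the truncated SVD remainder. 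Its $p$-Schatten norm is $\norm{(\SgVal_i)_{i>r}}_p=t(p,r)$, and it vanishes when $r=\rankPPH$. This gives \eqref{eq:OLR_errror_in_PPH}.

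For \eqref{eq:OLR_POS_Bound}, I will reduce the posterior formulas to the whitened variables. Set $\mbf{M}\coloneqq\rootObsCov^{-1}\fwdOper\rootPrCov$ and $\widehat{\mbf{M}}\coloneqq\rootObsCov^{-1}\hatFwdOper\rootPrCov$. Then \eqref{eq:pos_covariance} gives
\[
\PosCov=\rootPrCov(\Id_n-\mbf{M}^\top(\mbf{M}\mbf{M}^\top+\Id_m)^{-1}\mbf{M})\rootPrCov^\top=\rootPrCov(\Id_n+\mbf{M}^\top\mbf{M})^{-1}\rootPrCov^\top ,
\]
by the Woodbury identity, and the analogous formula holds for $\hatPosCov$. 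Writing $\mbf{B}\coloneqq\Id+\mbf{M}^\top\mbf{M}$ and $\widehat{\mbf{B}}\coloneqq\Id+\widehat{\mbf{M}}^\top\widehat{\mbf{M}}$, the resolvent identity gives $\mbf{B}^{-1}-\widehat{\mbf{B}}^{-1}=\mbf{B}^{-1}(\widehat{\mbf{M}}^\top\widehat{\mbf{M}}-\mbf{M}^\top\mbf{M})\widehat{\mbf{B}}^{-1}$. Both inverses have norm at most $1$. Moreover
\[
\widehat{\mbf{M}}^\top\widehat{\mbf{M}}-\mbf{M}^\top\mbf{M}=\widehat{\mbf{M}}^\top(\widehat{\mbf{M}}-\mbf{M})+(\widehat{\mbf{M}}-\mbf{M})^\top\mbf{M}.
\]
Hölder's inequality for Schatten norms then yields $\norm{\PosCov-\hatPosCovOLR}_p\le\norm{\rootPrCov}_\infty^2(\norm{\mbf{M}}_\infty+\norm{\widehat{\mbf{M}}}_\infty)\,t(p,r)$, which defines $C_1$.

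For the mean, I write $\PrMn=\rootPrCov\mbf{z}$, which is possible by the assumption $\PrMn\in\range\rootPrCov$. The mean formula then becomes
\[
\PosMn=\rootPrCov\bigl(\mbf{z}+\mbf{M}^\top(\mbf{M}\mbf{M}^\top+\Id)^{-1}(\rootObsCov^{-1}\data-\mbf{M}\mbf{z})\bigr),
\]
and similarly for $\hatPosMnOLR$. Using $\mbf{M}^\top(\mbf{M}\mbf{M}^\top+\Id)^{-1}=\mbf{B}^{-1}\mbf{M}^\top$, I split the difference into terms each containing a factor $\mbf{M}-\widehat{\mbf{M}}$ or $\mbf{B}^{-1}-\widehat{\mbf{B}}^{-1}$. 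Bounding these in operator norm gives $C_2 t(\infty,r)$, with $C_2$ depending on $C_1$, $\norm{\rootObsCov^{-1}\data}_2$, $\norm{\mbf{z}}_2$ and the norms above. The quantity $\mbf{z}$ can be taken as $\rootPrCov^{+}\PrMn$, so it depends only on $\PrMn$ and $\rootPrCov$. This bookkeeping is the main, though routine, obstacle; the key point is that the range condition is needed to write $\PrMn$ in whitened form.

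Finally, suppose $\PrCov$ is invertible. By \Creflem{lemma:tilde_w_j_definitions_consistent}, $\widetilde{\bm{w}}_i=\PrCov^{-1}\rootPrCov\rSgVec_i=\rootPrCov^{-\top}\rSgVec_i$, so $\trialMat_r=\rootPrCov^{-\top}\rSgMat_r$. Hence $\testMat_r\trialMat_r^\top=\rootPrCov\rSgMat_r(\rootPrCov^{-\top}\rSgMat_r)^\top$. The lemma is stated with $\rootObsCov^{-1}$ rather than $\rootObsCov^{-\top}$, so I would instead verify $\PrCov^{-1}\rootPrCov\rSgVec_i=\fwdOper^\top\rootObsCov^{-\top}\lSgVec_i\SgVal_i^{-1}$ directly. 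It follows from the SVD relation $\mbf{M}^\top\lSgVec_i=\SgVal_i\rSgVec_i$, i.e. $\rootPrCov^\top\fwdOper^\top\rootObsCov^{-\top}\lSgVec_i=\SgVal_i\rSgVec_i$, after multiplying by $\rootPrCov^{-\top}$.
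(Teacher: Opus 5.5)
Your proposal is correct. For \eqref{eq:OLR_approximate_forward_operator_general_prior_covariance_PPH} and \eqref{eq:OLR_errror_in_PPH} you follow the paper's computation: your identity $\trialMat_r^\top\rootPrCov=\rSgMat_r^\top$ is exactly what the paper's chain of equalities establishes, and you then read off the truncated SVD remainder in the same way.

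The real difference is in \eqref{eq:OLR_POS_Bound}. The paper does not derive these bounds; it obtains them in one line from \cite[Theorem 3.1]{KoenigLie2025}. You instead reprove the perturbation estimate from scratch, using whitening, the Woodbury and push-through identities, the resolvent identity for $(\Id_n+\mbf{M}^\top\mbf{M})^{-1}$, and H\"older's inequality for Schatten norms.

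Your route has several advantages:
\begin{enumerate}
\item It is self-contained and gives explicit constants. For example, $C_1=\norm{\rootPrCov}_\infty^2(\norm{\mbf{M}}_\infty+\norm{\widehat{\mbf{M}}}_\infty)$, which for the OLR model equals $2\SgVal_1\norm{\rootPrCov}_\infty^2$ since $\norm{\mbf{M}\rSgMat_r\rSgMat_r^\top}_\infty=\SgVal_1$.
\item It makes the $p$-independence of $C_1$ evident.
\item It pinpoints where $\PrMn\in\range\rootPrCov$ enters: it is needed to write $(\fwdOper-\hatFwdOperOLR(r))\PrMn=\rootObsCov(\mbf{M}-\widehat{\mbf{M}})\mbf{z}$.
\item The argument is generic in $\hatFwdOper$, so combined with \eqref{eq:bound_on_error_of_root_PPH} it also gives \eqref{eq:pos_error_ROM} verbatim.
\end{enumerate}
The paper's citation is shorter and places both results within the general a priori framework of \cite{KoenigLie2025}, but it leaves the constants implicit.

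For the final statement, your direct verification through $\mbf{M}^\top\lSgVec_i=\SgVal_i\rSgVec_i$ is a slightly shorter variant of the paper's proof of \Creflem{lemma:tilde_w_j_definitions_consistent}, which goes through $\mbf{M}^\top\mbf{M}\rSgVec_i=\SgVal_i^2\rSgVec_i$. You are also right that the lemma's statement should read $\rootObsCov^{-\top}$ rather than $\rootObsCov^{-1}$; that is what its proof actually produces.
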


For the proof of \Crefprop{proposition:OLR_approximate_forward_operator_general_prior_covariance}, see Appendix~\ref{subsection:proofs_OLR}.

The projector $\mbf{P}_r$ in \eqref{eq:OLR_approximate_forward_operator_general_prior_covariance} has been proposed in the literature on linear Gaussian inverse problems with rank-deficient prior covariances, where it is sometimes referred to as the `LIS projector'.
See e.g. \cite[Section 3.1.1]{konigDimensionModelReduction2025}, and note that the roles of $\testMat_r$ and $\trialMat_r$ here are reversed in \cite{konigDimensionModelReduction2025}.
In \cite[Theorem 3.2]{konigDimensionModelReduction2025}, the corresponding projected forward model $\fwdOper \mbf{P}_r$ is defined and shown to have the same optimality properties that were first shown in \cite{Spantini2015} for the setting of invertible prior covariances, i.e. it yields approximate posterior covariances that are optimal in the F\"{o}rstner metric and approximate posterior means that are optimal in a data-averaged sense.
In contrast, \Crefprop{proposition:OLR_approximate_forward_operator_general_prior_covariance} states an identity \eqref{eq:OLR_approximate_forward_operator_general_prior_covariance_PPH} for the root prior-preconditioned Hessian $\rootObsCov^{-1}\hatFwdOperOLR\rootPrCov$, uses this to calculate precisely the $p$-Schatten norm of the error of $\rootObsCov^{-1}\hatFwdOperOLR\rootPrCov$, states bounds on the $p$-Schatten norms of the approximate posterior covariance for arbitrary $1\leq p\leq\infty$, and states a bound on the Euclidean norm on the approximate posterior mean $\hatPosMn(\data)$ for any realisation $\data$ of the data.
Moreover, the proof of \Crefprop{proposition:OLR_approximate_forward_operator_general_prior_covariance} does not involve the restriction of the original inverse problem to a lower-dimensional subspace on which the restricted prior covariance is invertible, unlike \cite[Theorem 3.2]{konigDimensionModelReduction2025}.

In the context of optimal low-rank approximations, \Crefprop{proposition:OLR_approximate_forward_operator_general_prior_covariance} is important for multiple reasons.
By the Eckart-Young theorem, \eqref{eq:OLR_errror_in_PPH} implies that the projector $\mbf{P}_r$ in \eqref{eq:OLR_approximate_forward_operator_general_prior_covariance} is optimal in the sense of yielding the smallest possible error $\norm{\rootObsCov^{-1}\fwdOper\rootPrCov-\rootObsCov\fwdOper\mbf{Q}_r\rootPrCov}_p$ among all rank-$r$ projectors $\mbf{Q}_r$.
Second, the final statement of \Crefprop{proposition:OLR_approximate_forward_operator_general_prior_covariance} shows that the projector $\mbf{P}_r$ in \eqref{eq:OLR_approximate_forward_operator_general_prior_covariance} is a generalisation of the optimal projector $\mbf{P}_r^\dagger$ from \eqref{eq:OLR_approximate_forward_operator_and_optimal_projection} to the setting of singular prior covariances and to possibly non-symmetric square roots $\rootPrCov$ of $\PrCov$.
By recalling from the discussion of \Crefprop{proposition:optimal_low_rank_approximation}\ref{item_variance_reduction} that $\text{span}((\widetilde{\mbf{w}}_i)_{i=1}^{r})$ for $(\widetilde{\mbf{w}}_i)_{i\leq r}$ in \eqref{eq:hat_eigenvectors_and_tilde_eigenvectors} is the $r$-dimensional subspace on which the posterior covariance differs most from the prior covariance, we obtain the third reason why \Crefprop{proposition:OLR_approximate_forward_operator_general_prior_covariance} is important: it indicates that for s.p.s.d. $\PrCov$, $\range{\trialMat_r}$ plays the role of the $r$-dimensional subspace on which the largest relative variance reduction occurs.

\subsection{Reduced-order model}
\label{subsection:ROM}

To compute the exact posterior covariance and posterior mean using \eqref{eq:posterior}, we must evaluate matrix-vector products using the exact forward model $\fwdOper$.
However, since $\fwdOper=\ObsOper \fullModel^{-1}$ by \eqref{eq:forward_operator}, evaluating $\fwdOper$ involves solving a linear system of equations with the matrix $\fullModel\in\R^{d\times d}$, which often becomes increasingly expensive if $d\gg 1$. This motivates the use of a reduced-order model (ROM) of the exact forward model $\fwdOper$.
More precisely, we will replace the reference solution operator $\fullModel^{-1}$ with a ROM of $\fullModel^{-1}$, while leaving the observation operator $\ObsOper$ unchanged.
We shall focus on the specific ROM from \cite[Section 3]{Scheffels2025}, whose definition we recall below.
Unlike \Crefprop{proposition:optimal_low_rank_approximation}, we do not assume that $\PrCov$ is invertible, or that $\PrMn=\mbf{0}$; we only require that \Crefass{assumption_main} holds.

Recall the SVD $\rootObsCov^{-1} \fwdOper \rootPrCov=\lSgMat\SgValMat\rSgMat^\top$ in \eqref{eq:PPH_SVD}.
For $1\leq r\leq \rankPPH=\rank{\SgValMat}$, we define a rank-$r$ ROM to approximate the full-order model $\fullModel$, using Petrov-Galerkin projection.
Recall from the column submatrix notation in \Cref{subsection:notation} that $\lSgMat_r$ and $\rSgMat_r$ denote the matrices of the $r$ leading left and right singular vectors of $\rootObsCov^{-1}\fwdOper \rootPrCov$ respectively.
For the Petrov-Galerkin projection, we shall set the test and trial bases to be $\testMat_r$ and $\trialMat_r$ respectively, for $\testMat_r$ and $\trialMat_r$ given in \eqref{eq:testMat_and_trialMat}.
This leads to the following definition from \cite[Section 3]{Scheffels2025} of the rank-$r$ ROM of the full-order model $\fullModel$ in \eqref{eq:observation_model}:
\begin{equation}
\label{eq:ROM_of_PDE}
 \redModel(r)\coloneqq \trialMat_r^\top \fullModel\testMat_r\in\R^{r\times r}.
\end{equation}
The advantage of using $\redModel(r)$ instead of $\fullModel$ is that it is computationally cheaper to invert $\redModel(r)$ instead of inverting $\fullModel$, whenever $r<d$.
We now make the following assumption about $\redModel(r)$.

\begin{assumption}
 \label{assumption:ROM_of_PDE_invertible}
 {\rm For every $1\leq r\leq \rankPPH$, $\redModel(r)$ is invertible.}
\end{assumption}

Given \Crefass{assumption:ROM_of_PDE_invertible}, we can define a ROM-based approximate forward model:
\begin{equation}
\label{eq:approximate_forward_operator}
     \hatFwdOperROM(r)\coloneqq \ObsOper \testMat_r\redModel(r)^{-1}\trialMat_r^\top\in\R^{m\times d}.
\end{equation}
Recall from \Crefprop{proposition:optimal_low_rank_approximation}\ref{item_oblique_projector} that in the case of invertible prior covariances, the projector $\mbf{P}^\dagger_r$ defined in \eqref{eq:OLR_approximate_forward_operator_and_optimal_projection} yields an approximate forward model $\hatFwdOper^\dagger=\fwdOper\mbf{P}^\dagger_r$ that is optimal in the sense of minimising the average Kullback--Leibler divergence.
Recall from the final statement of \Crefprop{proposition:OLR_approximate_forward_operator_general_prior_covariance} that if $\PrCov$ is invertible and if we replace $\rootPrCov$ with $\PrCov^{1/2}$ in \eqref{eq:testMat}, then $\mbf{P}^\dagger_r=\testMat_r \trialMat_r^\top$.
However, even under these additional conditions, we do not expect that $\hatFwdOperROM=\hatFwdOperOLR$ will hold in general, because the first equation below does not hold in general:
\begin{equation*}
 \ObsOper \testMat_r \redModel(r)^{-1}\trialMat_r^\top=\ObsOper \fullModel^{-1}\testMat_r\trialMat_r^\top= \ObsOper \fullModel^{-1} \mbf{P}_r=\hatFwdOperOLR(r).
\end{equation*}
Thus, we do not expect that $\hatFwdOperROM(r)$ in \eqref{eq:approximate_forward_operator} will yield either an optimal low-rank approximation of the exact posterior in the sense of \Crefprop{proposition:optimal_low_rank_approximation}\ref{item_oblique_projector}, or an optimal approximation of the root of the exact prior-preconditioned Hessian in the sense of \Crefprop{proposition:OLR_approximate_forward_operator_general_prior_covariance}.
In the next section, we address the problem of determining the $p$-Schatten norm error of the approximate root prior-preconditioned Hessian $\rootObsCov^{-1}\hatFwdOperROM(r)\rootPrCov$.

\section{Error analysis}
\label{section:error_analysis}

In this section, we analyse the error in the approximate root prior-preconditioned Hessian associated to the reduced forward model $\hatFwdOperROM(r)$ defined in \eqref{eq:approximate_forward_operator}.
We use this analysis to prove upper bounds on the $p$-Schatten norm error in the posterior covariance and posterior mean corresponding to $\hatFwdOper(r)$.

Recall the column submatrix notation from \Cref{subsection:notation}, and the notation from \eqref{eq:PPH_SVD}.
The first result of this section is a useful characterisation of $\hatFwdOperROM(r)$.

 \begin{lemma}
 \label{lemma:decomposition_of_root_of_approximate_PPH}
 Suppose \Crefass{assumption_main} and \Crefass{assumption:ROM_of_PDE_invertible} hold, and let $1\leq r\leq \rankPPH$. Then
 \begin{equation}
\hatFwdOperROM(r)=\ObsOper \testMat_r \left( \lSgMat_r^\top \rootObsCov^{-1} \ObsOper \testMat_r\right)^{-1} \lSgMat_r^\top\rootObsCov^{-1}\fwdOper
\label{eq:approximate_forward_operator_ROM}
\end{equation}
and in particular
\begin{equation}
  \lSgMat_r \lSgMat_r^\top \rootObsCov^{-1}\hatFwdOperROM(r)\rootPrCov= \lSgMat_r  \lSgMat_r^\top \rootObsCov^{-1}\fwdOper\rootPrCov.
  \label{eq:approximate_forward_operator_ROM_post_projection_property}
 \end{equation}
\end{lemma}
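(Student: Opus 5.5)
The plan is to rewrite $\redModel(r)$ directly, using the definition \eqref{eq:trialMat} of $\trialMat_r$ and the structure $\fwdOper=\ObsOper\fullModel^{-1}$ from \eqref{eq:forward_operator}. Write $\mbf{D}_r\coloneqq(\SgValMat_r^\top\SgValMat_r)^{-1/2}\in\R^{r\times r}$. Since $r\leq\rankPPH$, this is diagonal with positive entries $\SgVal_1^{-1},\ldots,\SgVal_r^{-1}$ and hence invertible and symmetric. Then
\begin{equation*}
\trialMat_r^\top=\mbf{D}_r\lSgMat_r^\top\rootObsCov^{-1}\fwdOper=\mbf{D}_r\lSgMat_r^\top\rootObsCov^{-1}\ObsOper\fullModel^{-1}.
\end{equation*}
Substituting this into \eqref{eq:ROM_of_PDE}, the factor $\fullModel^{-1}\fullModel$ cancels, giving
\begin{equation*}
\redModel(r)=\mbf{D}_r\,\lSgMat_r^\top\rootObsCov^{-1}\ObsOper\testMat_r.
\end{equation*}
By \Crefass{assumption:ROM_of_PDE_invertible} and invertibility of $\mbf{D}_r$, the $r\times r$ matrix $\lSgMat_r^\top\rootObsCov^{-1}\ObsOper\testMat_r$ is invertible. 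Its inverse gives $\redModel(r)^{-1}=(\lSgMat_r^\top\rootObsCov^{-1}\ObsOper\testMat_r)^{-1}\mbf{D}_r^{-1}$.

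Next I insert this into \eqref{eq:approximate_forward_operator}, together with the expression for $\trialMat_r^\top$ above. The factors $\mbf{D}_r^{-1}\mbf{D}_r$ cancel, yielding
\begin{equation*}
\hatFwdOperROM(r)=\ObsOper\testMat_r(\lSgMat_r^\top\rootObsCov^{-1}\ObsOper\testMat_r)^{-1}\lSgMat_r^\top\rootObsCov^{-1}\fwdOper,
\end{equation*}
which is \eqref{eq:approximate_forward_operator_ROM}.

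For \eqref{eq:approximate_forward_operator_ROM_post_projection_property}, I left-multiply by $\lSgMat_r^\top\rootObsCov^{-1}$ and right-multiply by $\rootPrCov$. The leading factor becomes $\lSgMat_r^\top\rootObsCov^{-1}\ObsOper\testMat_r(\lSgMat_r^\top\rootObsCov^{-1}\ObsOper\testMat_r)^{-1}=\Id_r$. This gives $\lSgMat_r^\top\rootObsCov^{-1}\hatFwdOperROM(r)\rootPrCov=\lSgMat_r^\top\rootObsCov^{-1}\fwdOper\rootPrCov$, and left-multiplying by $\lSgMat_r$ finishes the argument.

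The only delicate point is invertibility of $\lSgMat_r^\top\rootObsCov^{-1}\ObsOper\testMat_r$, which must be deduced from \Crefass{assumption:ROM_of_PDE_invertible} via the factorisation above. Alternatively, one can observe that $\lSgMat_r^\top\rootObsCov^{-1}\ObsOper\testMat_r=\lSgMat_r^\top\rootObsCov^{-1}\ObsOper\fullModel^{-1}\fullModel\testMat_r$. In either case the identity is purely algebraic, so no step should be a real obstacle.
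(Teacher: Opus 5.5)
Your proposal is correct and follows the paper's proof. You substitute the definition of $\trialMat_r$ and $\fwdOper=\ObsOper\fullModel^{-1}$ into $\redModel(r)$ to obtain $\redModel(r)=(\SgValMat_r^\top\SgValMat_r)^{-1/2}\lSgMat_r^\top\rootObsCov^{-1}\ObsOper\testMat_r$, invert it, and substitute into $\hatFwdOperROM(r)$; the projection identity then follows by multiplying with $\lSgMat_r\lSgMat_r^\top\rootObsCov^{-1}$ on the left and $\rootPrCov$ on the right.
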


The proof of \Creflem{lemma:decomposition_of_root_of_approximate_PPH} is given in Appendix~\ref{subsection:proofs_error_analysis}.

The significance of \eqref{eq:approximate_forward_operator_ROM_post_projection_property} is that the approximate root prior-preconditioned Hessian $\rootObsCov^{-1}\hatFwdOperROM(r)\rootPrCov$ preserves the component of the exact root prior-preconditioned Hessian $\rootObsCov^{-1}\fwdOper\rootPrCov$ in the range of $\lSgMat_r$.
This property of $\hatFwdOperROM$ differs from the property \eqref{eq:OLR_approximate_forward_operator_general_prior_covariance_PPH} of $\hatFwdOperOLR$, thus providing an additional reason to expect that $\hatFwdOperROM$ in general differs from $\hatFwdOperOLR$.
By \eqref{eq:approximate_forward_operator_ROM_post_projection_property} , if $r=m$, then $\lSgMat_m=\lSgMat$ by the column submatrix notation in \Cref{subsection:notation}.
Thus $\lSgMat_m\lSgMat_m^\top =\Id_m$, which implies $\rootObsCov^{-1}\hatFwdOperROM(r)\rootPrCov=\rootObsCov^{-1}\fwdOper\rootPrCov$.
The property \eqref{eq:approximate_forward_operator_ROM_post_projection_property}  also facilitates an analysis of the error $\rootObsCov^{-1}\hatFwdOperROM(r)\rootPrCov-\rootObsCov^{-1}\fwdOper\rootPrCov$, which leads to the main theoretical result of this work, \Cref{theorem:error_decomposition_approximate_root_PPH}, below.
To this end, recall from \eqref{eq:sum_trailing_singular_values} that $t(p,r)$ is the $\ell_p$ norm of the vector of trailing singular values. We shall bound the $p$-Schatten norm of $\rootObsCov^{-1}\hatFwdOper(r)\rootPrCov-\rootObsCov^{-1}\fwdOper\rootPrCov$ by
 \begin{equation}
 \label{eq:bound_as_function_of_p}
 \begin{aligned}
 b(p,r)\coloneqq & \Norm{\lSgMat_\bot (\SgValMat_\bot^\top \SgValMat_\bot)^{1/2} \rSgMat_\bot^\top}_p
 \\
 &+
 \Norm{\lSgMat_\bot \lSgMat_\bot^\top\rootObsCov^{-1} \ObsOper \testMat_r (\lSgMat_r^\top \rootObsCov^{-1} \ObsOper \testMat_r)^{-1} (\SgValMat_r^\top\SgValMat_r)^{1/2}\rSgMat_r^\top}_p
 \\
 =& t(p,r)+
 \Norm{\lSgMat_\bot \lSgMat_\bot^\top\rootObsCov^{-1} \ObsOper \testMat_r (\lSgMat_r^\top \rootObsCov^{-1} \ObsOper \testMat_r)^{-1} (\SgValMat_r^\top\SgValMat_r)^{1/2}\rSgMat_r^\top}_p.
\end{aligned}
\end{equation}
\begin{remark}[Properties of $b(p,r)$]
\label{remark:properties_of_bound_as_function_of_p}
Let $1\leq p\leq \infty$. By \eqref{eq:sum_trailing_singular_values}, the first term $t(p,r)$ on the right-hand side of \eqref{eq:bound_as_function_of_p} vanishes if $r\geq \rankPPH$.
By the column submatrix notation in \Cref{subsection:notation} and the fact that $\lSgMat$ in \eqref{eq:PPH_SVD} is an $m\times m$ matrix, it follows that $\lSgMat_\bot=\mbf{0}$ if $r\geq m$, and thus the second term on the right-hand side of \eqref{eq:bound_as_function_of_p} vanishes.
Since $m\geq \rankPPH$ by \eqref{eq:PPH_SVD}, we conclude that if $r \geq m$, then $b(p,r)=0$.
\end{remark}

\begin{theorem}
 \label{theorem:error_decomposition_approximate_root_PPH}
 Suppose \Crefass{assumption_main} and \Crefass{assumption:ROM_of_PDE_invertible} hold, and let $1\leq r \leq \rankPPH$. Then
 \begin{subequations}
 \begin{align}
&\rootObsCov^{-1}(\fwdOper-\hatFwdOperROM(r))\rootPrCov
\nonumber\\
=& \lSgMat_\bot \lSgMat_\bot^\top \left( \Id_m- \rootObsCov^{-1}\ObsOper\testMat_r (\lSgMat_r^\top \rootObsCov^{-1}\ObsOper \testMat_r)^{-1} \lSgMat_r^\top\right)\rootObsCov^{-1} \fwdOper \rootPrCov
\label{eq:decomposition_of_error_of_root_PPH}
\\
=& \lSgMat_\bot (\SgValMat_\bot^\top \SgValMat_\bot)^{1/2} \rSgMat_\bot^\top
  -\lSgMat_\bot \lSgMat_\bot^\top\rootObsCov^{-1}\ObsOper \testMat_r \left( \lSgMat_r^\top \rootObsCov^{-1} \ObsOper \testMat_r\right)^{-1}(\SgValMat_r^\top\SgValMat_r)^{1/2}\rSgMat_r^\top.
  \label{eq:decomposition_of_error_of_root_PPH_alternate_form}
\end{align}
\end{subequations}
For $1\leq p\leq \infty$,
\begin{equation}
 \Norm{\rootObsCov^{-1}(\fwdOper-\hatFwdOperROM(r))\rootPrCov}_p\leq b(p,r).
 \label{eq:bound_on_error_of_root_PPH}
 \end{equation}
 Finally, if in addition $\PrMn\in\range \rootPrCov$, then the approximate posterior covariance $\hatPosCovROM$ and approximate posterior mean $\hatPosMnROM$ obtained by replacing $\hatFwdOper$ in \eqref{eq:approx_posterior} with $\hatFwdOperROM(r)$ satisfy, for any realisation $\data$ of the data random variable $\dataRV$ from \eqref{eq:observation_model},
  \begin{equation}
  \Norm{\PosCov-\hatPosCovROM}_p
  \leq C_1^\prime b(p,r),\qquad \Norm{\PosMn(\data)-\hatPosMnROM(\data)}_2
  \leq  C_2^\prime b(\infty,r),
  \label{eq:pos_error_ROM}
 \end{equation}
 where $C_1^\prime =C_1^\prime(\rootPrCov,\rootObsCov,\fwdOper,\hatFwdOperROM(r))$ and $C_2^\prime =C_2^\prime (C_1^\prime,\data,\PrMn,\rootPrCov,\rootObsCov,\fwdOper,\hatFwdOperROM(r))$ are finite and do not depend on $p$.
\end{theorem}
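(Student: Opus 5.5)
The plan is to use \Creflem{lemma:decomposition_of_root_of_approximate_PPH}, which gives the closed form \eqref{eq:approximate_forward_operator_ROM} of $\hatFwdOperROM(r)$. Write $\mbf{E}\coloneqq\rootObsCov^{-1}(\fwdOper-\hatFwdOperROM(r))\rootPrCov$, $\mbf{M}\coloneqq \rootObsCov^{-1}\ObsOper\testMat_r$ and $\mbf{N}\coloneqq(\lSgMat_r^\top\mbf{M})^{-1}$; the invertibility of $\lSgMat_r^\top\mbf{M}$ is implicit in the lemma. Multiplying \eqref{eq:approximate_forward_operator_ROM} by $\rootObsCov^{-1}$ on the left and by $\rootPrCov$ on the right gives
\begin{equation*}
\mbf{E}=(\Id_m-\mbf{M}\mbf{N}\lSgMat_r^\top)\rootObsCov^{-1}\fwdOper\rootPrCov .
\end{equation*}
Next I insert $\Id_m=\lSgMat_r\lSgMat_r^\top+\lSgMat_\bot\lSgMat_\bot^\top$ on the left. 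The $\lSgMat_r\lSgMat_r^\top$ component vanishes, because $\lSgMat_r^\top(\Id_m-\mbf{M}\mbf{N}\lSgMat_r^\top)=\lSgMat_r^\top-(\lSgMat_r^\top\mbf{M})\mbf{N}\lSgMat_r^\top=\mbf{0}$. This is the same fact as \eqref{eq:approximate_forward_operator_ROM_post_projection_property}. What remains is \eqref{eq:decomposition_of_error_of_root_PPH}.

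For \eqref{eq:decomposition_of_error_of_root_PPH_alternate_form}, I substitute the SVD $\rootObsCov^{-1}\fwdOper\rootPrCov=\lSgMat_r\SgValMat_{rr}\rSgMat_r^\top+\lSgMat_\bot\SgValMat'\rSgMat_\bot^\top$, where $\SgValMat_{rr}=(\SgValMat_r^\top\SgValMat_r)^{1/2}$ and $\SgValMat'$ denotes the trailing diagonal block. Then I use $\lSgMat_r^\top\lSgMat_\bot=\mbf{0}$ and $\lSgMat_\bot^\top\lSgMat_r=\mbf{0}$.
\begin{itemize}
\item The identity term gives $\lSgMat_\bot\SgValMat'\rSgMat_\bot^\top$. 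Its nonzero singular values are $(\SgVal_i)_{i>r}$, so it can be written as $\lSgMat_\bot(\SgValMat_\bot^\top\SgValMat_\bot)^{1/2}\rSgMat_\bot^\top$ with the paper's notational conventions on the blocks.
\item The second term gives $-\lSgMat_\bot\lSgMat_\bot^\top\mbf{M}\mbf{N}\SgValMat_{rr}\rSgMat_r^\top$, since $\lSgMat_r^\top$ annihilates the $\lSgMat_\bot$ part of the SVD.
\end{itemize}
The main bookkeeping obstacle is this handling of the rectangular block sizes, including the convention $\lSgMat_\bot=\mbf{0}$ when $r\ge m$.

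Bound \eqref{eq:bound_on_error_of_root_PPH} then follows from the triangle inequality for the $p$-Schatten norm applied to \eqref{eq:decomposition_of_error_of_root_PPH_alternate_form}. The first term has Schatten norm exactly $t(p,r)$ because $\lSgMat_\bot$ and $\rSgMat_\bot$ have orthonormal columns. This matches the definition of $b(p,r)$ in \eqref{eq:bound_as_function_of_p}.

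For \eqref{eq:pos_error_ROM}, I will reuse the perturbation argument from the proof of \Crefprop{proposition:OLR_approximate_forward_operator_general_prior_covariance}. The only property of $\hatFwdOperOLR$ used there is that the covariance and mean errors are controlled, with constants depending only on the listed matrices, by the $p$-Schatten and operator norms of $\mbf{E}$. Concretely, set $\mbf{H}=\rootObsCov^{-1}\fwdOper\rootPrCov$ and $\widehat{\mbf{H}}=\rootObsCov^{-1}\hatFwdOperROM(r)\rootPrCov$, and use $\PosCov=\rootPrCov(\Id_n+\mbf{H}^\top\mbf{H})^{-1}\rootPrCov^\top$ together with its hat analogue. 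The resolvent identity gives
\begin{equation*}
(\Id+\mbf{H}^\top\mbf{H})^{-1}-(\Id+\widehat{\mbf{H}}^\top\widehat{\mbf{H}})^{-1}=(\Id+\mbf{H}^\top\mbf{H})^{-1}\bigl(\widehat{\mbf{H}}^\top\widehat{\mbf{H}}-\mbf{H}^\top\mbf{H}\bigr)(\Id+\widehat{\mbf{H}}^\top\widehat{\mbf{H}})^{-1}.
\end{equation*}
Writing $\widehat{\mbf{H}}^\top\widehat{\mbf{H}}-\mbf{H}^\top\mbf{H}=\widehat{\mbf{H}}^\top(\widehat{\mbf{H}}-\mbf{H})+(\widehat{\mbf{H}}-\mbf{H})^\top\mbf{H}$ and applying Hölder's inequality for Schatten norms gives $C_1'b(p,r)$, with $C_1'$ independent of $p$.

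For the mean, the condition $\PrMn=\rootPrCov\mbf{z}$ lets me write both means as $\rootPrCov$ times $(\Id+\mbf{H}^\top\mbf{H})^{-1}(\mbf{z}+\mbf{H}^\top\rootObsCov^{-1}\data)$ and its hat analogue. Differencing and using the same resolvent identity, with operator norms only, gives $C_2'b(\infty,r)$.
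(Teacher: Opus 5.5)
Your proof is correct. The decomposition part is the paper's proof up to the order of two steps: you substitute \eqref{eq:approximate_forward_operator_ROM}, split $\Id_m=\lSgMat_r\lSgMat_r^\top+\lSgMat_\bot\lSgMat_\bot^\top$, kill the $\lSgMat_r$-component, insert the SVD and apply the triangle inequality. Your trailing block $\SgValMat'\in\R^{(m-r)\times(n-r)}$ is the right reading of the paper's $\lSgMat_\bot(\SgValMat_\bot^\top\SgValMat_\bot)^{1/2}\rSgMat_\bot^\top$. Read literally with the column-submatrix convention, that expression is dimensionally consistent only when $m=n$.

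The genuine difference is in \eqref{eq:pos_error_ROM}. The paper carries out no perturbation argument, either here or in the proof of \Crefprop{proposition:OLR_approximate_forward_operator_general_prior_covariance}, so there is nothing there to reuse. It simply invokes \cite[Theorem 3.1]{KoenigLie2025}, which bounds the posterior covariance and mean errors by norms of $\rootObsCov^{-1}(\fwdOper-\hatFwdOper)\rootPrCov$ for an arbitrary approximate forward model, and then inserts \eqref{eq:bound_on_error_of_root_PPH}.

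Your self-contained derivation is sound: it uses $\PosCov=\rootPrCov(\Id_n+\mbf{H}^\top\mbf{H})^{-1}\rootPrCov^\top$, the resolvent identity and H\"older's inequality. It also gives explicit constants. Both $(\Id_n+\mbf{H}^\top\mbf{H})^{-1}$ and $(\Id_n+\widehat{\mbf{H}}^\top\widehat{\mbf{H}})^{-1}$ have operator norm at most $1$, so you may take $C_1'=\norm{\rootPrCov}_\infty^2(\norm{\mbf{H}}_\infty+\norm{\widehat{\mbf{H}}}_\infty)$. This makes the $p$-independence evident and shows that the covariance bound does not need $\PrMn\in\range\rootPrCov$.

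In the mean estimate, make one step explicit, because the resolvent identity alone does not cover it. Differencing produces the resolvent term applied to $\mbf{z}+\mbf{H}^\top\rootObsCov^{-1}\data$, and also the term $\rootPrCov(\Id_n+\widehat{\mbf{H}}^\top\widehat{\mbf{H}})^{-1}(\mbf{H}-\widehat{\mbf{H}})^\top\rootObsCov^{-1}\data$, which comes from $\mbf{H}^\top$ versus $\widehat{\mbf{H}}^\top$. That term is bounded by $\norm{\rootPrCov}_\infty\norm{\mbf{E}}_\infty\norm{\rootObsCov^{-1}\data}_2$, so your conclusion stands.

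The citation route is shorter and places the result in a general perturbation framework. Yours is elementary and fully explicit.
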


The proof of \Cref{theorem:error_decomposition_approximate_root_PPH} is given in Appendix~\ref{subsection:proofs_error_analysis}.

 By the properties of $b(p,r)$ described in \Crefrem{remark:properties_of_bound_as_function_of_p}, we conclude that if $r\geq m$, then $b(p,r)=0$ for any value of $p$. This is consistent with the observation made below \Creflem{lemma:decomposition_of_root_of_approximate_PPH}, namely that if $r=m$, then by \eqref{eq:approximate_forward_operator_ROM_post_projection_property} it holds that $\rootObsCov^{-1}\hatFwdOperROM(r)\rootPrCov=\rootObsCov^{-1}\fwdOper\rootPrCov$. In particular, \Cref{theorem:error_decomposition_approximate_root_PPH} also yields that if $r=m$, the error in the ROM-based approximation of the root prior-preconditioned Hessian will vanish, and the Gaussian approximate posterior obtained by replacing $\hatFwdOper$ with $\hatFwdOperROM$ in \eqref{eq:approx_posterior} agrees with the exact posterior in \eqref{eq:posterior}.

 \begin{remark}[Tightness of error bounds for $\hatPosCovROM$ and $\hatPosMnROM$]
 \label{remark:tightness_of_error_bounds}
The bounds in \eqref{eq:OLR_POS_Bound} and \eqref{eq:pos_error_ROM} are obtained using certain \textit{a priori} bounds in \cite[Theorem 3.1]{KoenigLie2025}.
These \textit{a priori} bounds are not in general tight for $1\leq r<\rankPPH$, because they follow from applications of the triangle inequality and the multiplicativity property of $p$-Schatten norms.
Thus, we do not expect the bounds in \eqref{eq:OLR_POS_Bound} and \eqref{eq:pos_error_ROM} to be tight.
\end{remark}

Recall that the bound \eqref{eq:OLR_errror_in_PPH} on the error $\norm{\rootObsCov^{-1} (\fwdOper-\hatFwdOperOLR(r))\rootPrCov}_p$ is in fact an equality for all $1\leq p\leq \infty$ and $1\leq r\leq \rankPPH$.
It is natural to ask whether the analogous bound in \eqref{eq:bound_on_error_of_root_PPH} for $\norm{\rootObsCov^{-1} (\fwdOper-\hatFwdOperROM(r))\rootPrCov}_p$ is also an equality for certain values of $p$.
Since \eqref{eq:bound_on_error_of_root_PPH} arises from an application of the triangle inequality with the $p$-Schatten norm to \eqref{eq:decomposition_of_error_of_root_PPH_alternate_form}, equality holds if and only if equality holds in the application of the triangle inequality.
  It is known that for every $s,t\in\N$, the Banach space $(\R^{s\times t},\norm{\cdot}_p)$ is strictly convex if and only if $1<p<\infty$; see \cite[Corollary 1]{Aziznejad26042021}.
 By the properties of strictly convex spaces, equality holds in the triangle inequality $\norm{\mbf{A}+\mbf{B}}_p\leq \norm{\mbf{A}}_p+\norm{\mbf{B}}_p$ if and only if $\mbf{A},\mbf{B}\in\R^{s\times t}$ satisfy the collinearity condition that $\mbf{A}=c \mbf{B}$ for some $c>0$.
 For the case $p=\infty$, equality holds in $\norm{\mbf{A}+\mbf{B}}_\infty\leq \norm{\mbf{A}}_\infty+\norm{\mbf{B}}_\infty$ if and only if there exist unit vectors $\mbf{v}^{(1)}\in\R^{s}$ and $\mbf{v}^{(2)}\in\R^{t}$ such that $\mbf{A}\mbf{v}^{(2)}=\norm{\mbf{A}}_\infty \mbf{v}^{(1)}$ and $\mbf{B}\mbf{v}^{(2)}=\norm{\mbf{B}}_\infty \mbf{v}^{(1)}$; see e.g. \cite{4709579} for a proof of necessity.
Now note that the first term $\lSgMat_\bot (\SgValMat_\bot^\top \SgValMat_\bot)^{1/2} \rSgMat_\bot^\top$ on the right-hand side of \eqref{eq:decomposition_of_error_of_root_PPH_alternate_form} satisfies $ \lSgMat_\bot (\SgValMat_\bot^\top \SgValMat_\bot)^{1/2} \rSgMat_\bot^\top\mbf{v}^{(2)}\neq \mbf{0}$ if and only if $\mbf{v}^{(2)}\in \range \rSgMat_\bot$.
 On the other hand, if $\mbf{v}^{(2)}\in\range \rSgMat_\bot$, then $\rSgMat_r \mbf{v}^{(2)}=\mbf{0}$.
 Thus, the first and second terms on the right-hand side of \eqref{eq:decomposition_of_error_of_root_PPH_alternate_form} do not satisfy either the collinearity condition for the case $1<p<\infty$, or the necessary condition for the $p=\infty$ case.
 This implies that if both terms on the right-hand side of \eqref{eq:decomposition_of_error_of_root_PPH_alternate_form} are nonzero, then equality in \eqref{eq:bound_on_error_of_root_PPH} does not hold for any $1<p\leq \infty$.

\section{Numerical experiments}
\label{section:numerical_experiments}
In this section, we present numerical experiments to compare the performance of approximate posterior measures obtained using the ROM-based approximation described in \Cref{subsection:ROM} against the performance of approximate posterior measures obtained using the optimal low-rank approximation described in \Cref{subsection:optimal_low_rank_approximation}.
We compare the error of the approximate root prior-preconditioned Hessian $\rootObsCov^{-1}\hatFwdOper^\bullet\rootPrCov$ for $\bullet=\text{OLR}$ and $\bullet=\text{ROM}$ to the corresponding bounds given in \eqref{eq:OLR_errror_in_PPH} and \eqref{eq:bound_on_error_of_root_PPH}.
We also compare the error of $\hatPosCov^\bullet$ and $\hatPosMn^\bullet$ for $\bullet=\text{OLR}$ and $\bullet=\text{ROM}$ to their corresponding bounds in \eqref{eq:OLR_POS_Bound} and \eqref{eq:pos_error_ROM}.
We consider two different test problems from structural engineering that involve the inference of static structural loads, namely a cantilever bar in \Cref{sec:Bar} and a two-dimensional plate in \Cref{sec:Plate}. The codes used to generate these results are available on GitHub\footnote{\url{https://github.com/jakobscheffels/Error_bounds_for_approximate_posteriors_from_LIS_ROMs.git}}.

Below, the parameters $d$, $m$, $n$, and $\rankPPH$ are as in \Crefass{assumption_main} and \eqref{eq:PPH_SVD}.
In both the examples below, the dimension $m$ of the data is smaller than the dimension $n$ of the uncertain component of the parameter, where $n=\rank{\PrCov}$.

\subsection{Cantilever bar}\label{sec:Bar}
We first consider a cantilever bar from \cite[Section 4.1]{Scheffels2025} governed by the static equilibrium equation and boundary conditions
\begin{flalign*}
    D\frac{d^2u(z)}{dz^2}+q(z)=0&,\\
    u(0)=0&,\\
    \frac{du(z)}{dz}\vert_{z=L}=0&,
\end{flalign*}
for $z\in[0,L]$ with $L=2~\text{m}$ and $D=4\times10^8~\text{N}$ as the constant rigidity, where $u(z)$ is the axial displacement and $q(z)$ is the distributed load modeled as a Gaussian random field. We assume a constant prior mean $\mu_Q=4\times 10^6 ~\text{N}$ and an exponential auto-correlation given by $\Gamma_{QQ}(z_1,z_2)=\sigma_Q^2\times\text{exp}(-\frac{\vert z_1-z_2\vert}{\theta})$ with standard deviation $\sigma_Q=0.3\times \mu_Q$ and correlation length $\theta=L/2$.

The governing equation is solved by the linear finite element method using $n=100$ linear bar elements with shape functions $N_1(\xi)=1-\frac{\xi\cdot n}{L}$ and $N_2(\xi)=\frac{\xi\cdot n}{L}$ with elemental coordinate $\xi\in[0,\frac{L}{n}]$.
We discretise the random field using the midpoint method \cite{der1988stochastic} and obtain the load vector $\mbf{q}\in\R^n$ with $\mbf{q}_i= q(\Bar{z}_i)$ for $1\le i\le n$, where $\Bar{z}_i$ is the midpoint of element $i$. The discretised approximation of the random load has a constant mean vector $\bm\mu_Q \coloneqq \mu_Q\times\mbf{1}_n\in \R^{n}$, where $\mbf{1}_n\in\R^{n}$ is a vector with all entries equal to $1$.
The covariance matrix $\mbf{\Gamma}_{QQ} \in \R^{n \times n}$ is calculated by $\bm\Gamma_{QQ,i,j}\coloneqq\Gamma_{QQ}(\Bar{z}_i,\Bar{z}_j)$ for all elements $1\le i,j\le n$.
By the choice of the exponential auto-correlation function, $\bm\Gamma_{QQ}$ is invertible.
Our choice of numerical discretisation yields $d=n$ in this example.
The nodal force vector $\unknown\in\R^d$ obtained through integration of the discretised random load over the linear finite element shape functions is given as $\unknown=\mbf{L}\mbf{q}$, where $\mbf{L}\in\R^{d\times n}$ maps the discretised load to the nodal force vector by integration of the element shape functions assembled such that the contribution of element $i$ to the $e$-th component of $\unknown$ is $\mbf{L}_{e,i} \coloneqq \int_{\Bar{z}_i-\frac{L}{2n}}^{\Bar{z}_i+\frac{L}{2n}} N_e(\xi)\, \mathrm{d} \xi$.
The unknown nodal force vector $\unknown$ therefore is distributed according to a Gaussian prior $\mathcal{N}(\PrMn,\PrCov)$ with $\PrMn\coloneqq \mbf{L}\bm\mu_Q\in\R^d$ and $\PrCov\coloneqq \mbf{L}\mbf{\Gamma}_{QQ}\mbf{L}^\top\in\R^{d\times d}$.
For this construction, $\PrCov$ is invertible since $\mbf{L}$ and $\mbf{\Gamma}_{QQ}$ are invertible.
Since $d=n$ in this case, it holds that $\range \rootPrCov=\R^n$ for any $\rootPrCov$ that satisfies $\PrCov=\rootPrCov\rootPrCov^\top$.
Hence the range condition $\PrMn\in\range\rootPrCov$ in \Crefprop{proposition:OLR_approximate_forward_operator_general_prior_covariance} and \Cref{theorem:error_decomposition_approximate_root_PPH} is satisfied.

To define the observation operator $\ObsOper$ in \Crefass{assumption_main}, we drew $m=10$ locations from the uniform distribution along the bar and fixed these as the locations at which we observe the state $\state$.
We defined the observation noise covariance $\ObsCov\coloneqq \sigma_{\text{obs}}^2\times\Id_m\in\R^{m\times m}$,
where $\Id_m$ is the $m$-dimensional identity matrix and $\sigma_{\text{obs}}=0.001$ is set to $5\%$ of the maximum mean displacement.
We generate the data $\data$ by sampling the random variable $\dataRV$ defined according to \Crefass{assumption_main}.
For this choice of $\ObsCov$ and for the choices described in the previous paragraph, we have $\rankPPH\coloneqq \rank{\rootObsCov^{-1}\mbf{G}\rootPrCov}= m$.
Thus, $\rootObsCov^{-1}\mbf{G}\rootPrCov$ has exactly $\rankPPH=m=10$ nonzero singular values $(\SgVal_i)_i$.

\begin{figure}
    \centering

    \begin{minipage}[b]{0.45\textwidth}
        \centering
        \includegraphics[width=\textwidth]{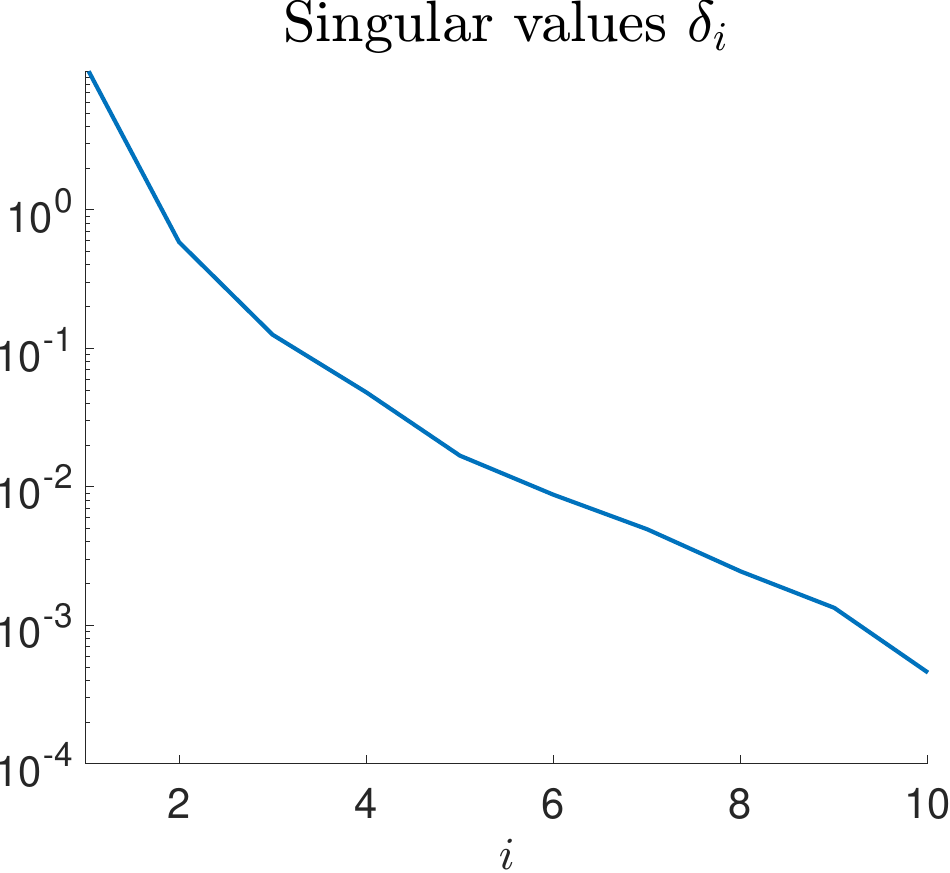}
        \small (a)
    \end{minipage}
    \hfill
    \begin{minipage}[b]{0.45\textwidth}
        \centering
        \includegraphics[width=\textwidth]{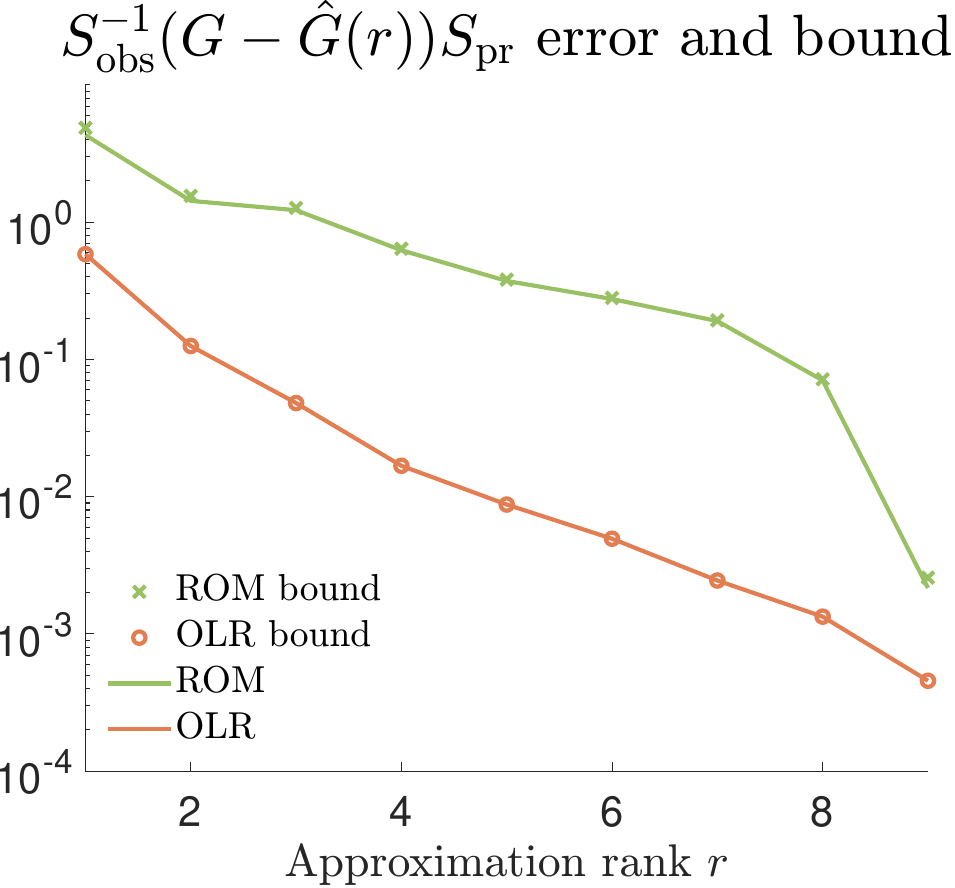}
        \small (b)
    \end{minipage}

    \medskip

    \begin{minipage}[b]{0.45\textwidth}
        \centering
        \includegraphics[width=\textwidth]{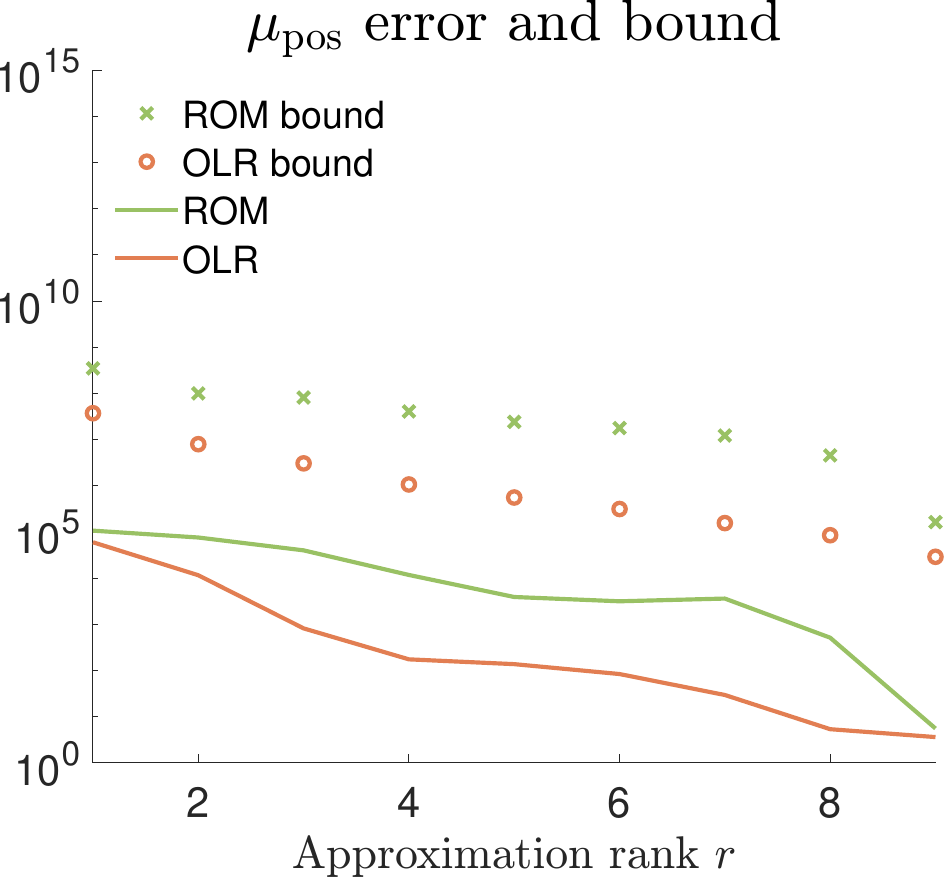}
        \small (c)
    \end{minipage}
    \hfill
    \begin{minipage}[b]{0.45\textwidth}
        \centering
        \includegraphics[width=\textwidth]{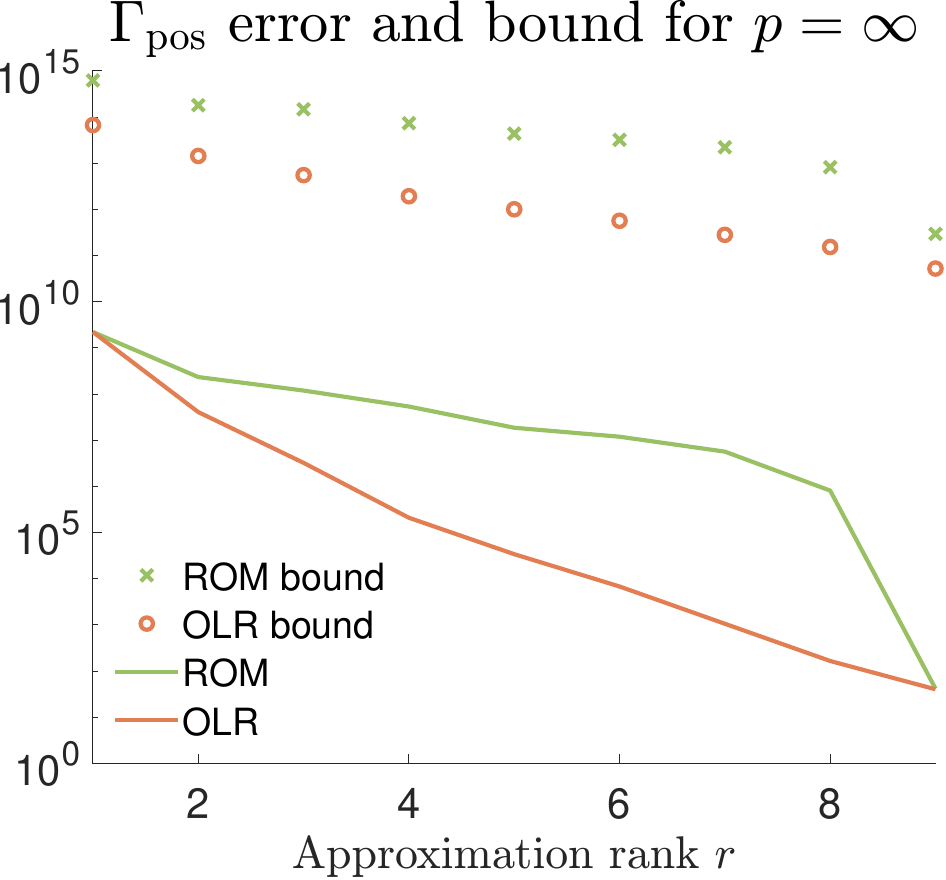}
        \small (d)
    \end{minipage}

    \caption{Comparison of $\rootObsCov^{-1}(\fwdOper-\hatFwdOper(r))\rootPrCov$, for $\hatFwdOper=\hatFwdOperOLR$ from \eqref{eq:OLR_approximate_forward_operator_general_prior_covariance} and $\hatFwdOper=\hatFwdOperROM$ from \eqref{eq:approximate_forward_operator}, for the cantilever bar problem in \Cref{sec:Bar}.
    \\
    (a): Singular values $(\SgVal_i)_i$ of $\rootObsCov^{-1}\fwdOper\rootPrCov$ from \eqref{eq:PPH_SVD}.
    \\
    (b): Errors $\Vert\rootObsCov^{-1}(\fwdOper-\hatFwdOper(r))\rootPrCov\Vert_\infty$ for $\hatFwdOper=\hatFwdOperOLR$ and $\hatFwdOper=\hatFwdOperROM$, the OLR bound $t(\infty,r)$ from \eqref{eq:OLR_errror_in_PPH}, and the ROM bound $b(\infty,r)$ from \eqref{eq:bound_on_error_of_root_PPH}.
    \\
       (c): Errors $\Vert \PosMn(\data)-\hatPosMn(\data)\Vert_2$ resulting from using $\hatFwdOperOLR$ and $\hatFwdOperROM$ in \eqref{eq:pos_mean_approx}, the OLR bound $C_2 t(\infty,r)$ from \eqref{eq:OLR_POS_Bound}, and the ROM bound $C_2^\prime b(\infty,r)$ from \eqref{eq:pos_error_ROM}.
    \\
    (d): Errors $\Vert \PosCov-\hatPosCov\Vert_\infty$ resulting from using $\hatFwdOperOLR$ and $\hatFwdOperROM$ in \eqref{eq:pos_covariance_approx}, the OLR bound $C_1 t(\infty,r)$ from \eqref{eq:OLR_POS_Bound}, and the ROM bound $C_1^\prime b(\infty,r)$ from \eqref{eq:pos_error_ROM}.}
    \label{fig:Bar}
\end{figure}

\Cref{fig:Bar}(a) shows the nonzero singular values $(\SgVal_i)_{i\leq \rankPPH}$ of $\rootObsCov^{-1}\fwdOper\rootPrCov$.
\Cref{fig:Bar}(b) shows the errors $\norm{\rootObsCov^{-1}(\fwdOper-\hatFwdOper(r))\rootPrCov}_\infty$ for $\hatFwdOper=\hatFwdOperOLR$ and $\hatFwdOper=\hatFwdOperROM$, the OLR error bound $t(\infty,r)$ from \eqref{eq:OLR_errror_in_PPH}, and the ROM error bound $b(\infty,r)$ from \eqref{eq:bound_on_error_of_root_PPH}.
By \eqref{eq:OLR_errror_in_PPH} and \eqref{eq:sum_trailing_singular_values} (respectively, \eqref{eq:bound_on_error_of_root_PPH} and \Crefrem{remark:properties_of_bound_as_function_of_p}), the OLR (resp. ROM) approximation recovers the exact root prior-preconditioned Hessian and its error vanishes if $r=\rankPPH$ (resp. $r=m$).
Since in this section $\rankPPH=m$, we only show the errors of the approximations for $r<\rankPPH$.

Since the $\infty$-Schatten norm is used to quantify the errors, we have
\begin{equation}
\label{eq:infty_Schatten_norm_of_error_equals_largest_trailing_singular_value}
 \norm{\rootObsCov^{-1}(\fwdOper-\hatFwdOperOLR(r))\rootPrCov}_\infty\overset{\text{\eqref{eq:OLR_errror_in_PPH}}}{=}t(\infty,r)\overset{\text{\eqref{eq:sum_trailing_singular_values}}}{=}\SgVal_{r+1},\quad 1\leq r< \rankPPH.
\end{equation}
This explains why the errors $\norm{\rootObsCov^{-1}(\fwdOper-\hatFwdOperOLR(r))\rootPrCov}_\infty$ in \Cref{fig:Bar}(b) match the values of $\SgVal_i$ in \Cref{fig:Bar}(a).
In \Cref{fig:Bar}(b), the observed errors $\norm{\rootObsCov^{-1}(\fwdOper-\hatFwdOperROM(r))\rootPrCov}_\infty$ for the ROM approximation are larger than the observed errors for the OLR approximation.
This is consistent with the expected behaviour that we described below \eqref{eq:approximate_forward_operator}.
Furthermore, the upper bound given in \eqref{eq:bound_on_error_of_root_PPH} for the ROM approximation is very close but not exactly equal to the observed error of the ROM approximation, due to the use of triangle inequality when deriving the bound.
On the other hand, equality between the bound $t(p,r)$ and the observed error of the OLR approximation is observed; this is consistent with \eqref{eq:OLR_errror_in_PPH}.
Finally, \Cref{fig:Bar}(c) and (d) show the errors $\norm{\PosMn(\data)-\hatPosMn(\data)}_2$ and $\norm{\PosCov-\hatPosCov}_\infty$ in the posterior mean and posterior covariance respectively for the ROM and OLR approximations, as well as the bounds on these errors given by \eqref{eq:OLR_POS_Bound} and \eqref{eq:pos_error_ROM}.
We observe that the bounds are not tight; this is due to the large values of the constants $(C_i,C_i^\prime)$ for $i=1,2$, and is consistent with our expectations in \Crefrem{remark:tightness_of_error_bounds}.
However, the decay in the bounds closely tracks the decay in the errors, thus providing numerical evidence for the idea that the error in the approximate root prior-preconditioned Hessian is the key determinant for the error in the approximate posterior mean and approximate posterior covariance. Recall that the errors of the posterior mean and covariance approximation are equal to $0$ if $r=m$, because these errors are controlled by the errors in the corresponding approximate root prior-preconditioned Hessians. In particular, the ROM-based approximate forward model $\hatFwdOperROM(r)$ described in \Cref{subsection:ROM} yields an exact solution of the inverse problem, while reducing the dimension of the inverse problem from $d$ to $m$.

\subsection{Plate}\label{sec:Plate}

In \Cref{sec:Bar}, we used a Gaussian prior with an invertible prior covariance.
In this example, we use a Gaussian prior with a singular prior covariance.
Consider a thin elastic plate in plane stress, where the displacement field $u(x,y)= \begin{pmatrix}
    u_x(x,y)\\
    u_y(x,y)
\end{pmatrix}$ solves the Navier-Lam\'{e} differential equation
\begin{flalign*}
    \mu\Delta u+(\lambda +\mu)\nabla(\nabla\cdot u)+f=0,&\\
    \mu=\frac{E}{2(1+\nu)}, \, \lambda=\frac{E\nu}{1-\nu^2}&,
\end{flalign*}
for $x\in[0,L]$ with $L=270$ and $y\in[0,H]$ with $H=928$, where $\mu,\lambda\in\R$ are the Lam\'{e} parameters, $f$ is the load, $E=210\times 10^9$ the Young's modulus and $\nu=0.3$ the Poisson ratio. The plate is fixed at the bottom, has free edges on the left and the right and is loaded by a distributed load $q(x)$ at its top edge giving boundary conditions
\begin{flalign*}
    u(x,y)|_{y=0}=0,&\\
    \sigma_{xx}|_{x=0}=0,\quad \sigma_{xy}|_{x=0}=0,&\\
    \sigma_{xx}|_{x=L}=0,\quad \sigma_{xy}|_{x=L}=0,&\\
    \sigma_{yy}|_{y=H}=q(x),\quad \sigma_{xy}|_{y=H}=0,
\end{flalign*}
where $\sigma_{xx},\sigma_{yy}$ denote the normal stress in directions normal to $x$ and $y$ surfaces while $\sigma_{xy}$ denotes the shear stress acting on a surface normal to $y$.

We discretise the plate using quadrilateral elements with linear shape functions $N_1(\xi,\eta)=1/4(1+\xi)(1+\eta)$, $N_2(\xi,\eta)=1/4(1-\xi)(1+\eta)$, $N_3(\xi,\eta)=1/4(1-\xi)(1-\eta)$ and $N_4(\xi,\eta)=1/4(1+\xi)(1-\eta)$ with elemental coordinates $\xi,\eta\in[-1,1]$. The discretisation leads to $d=1972$ degrees of freedom.

As in \Cref{sec:Bar}, the applied load is modeled as a Gaussian random field with constant mean $\mu_Q=10^6$ and exponential auto-correlation kernel with coefficient of variation $0.3$ and $\theta=0.3\times L$. The load is discretised using the midpoint method with $n=16$ random variables. The mean vector $\bm\mu_Q\in\R^n$ and covariance matrix $\mbf{\Gamma}_{QQ}\in\R^{n\times n}$ are given by $\bm\mu_Q\coloneqq \mu_Q\times\mbf{1}_n$ and $\mbf{\Gamma}_{QQ,ij}\coloneqq (0.3\mu_Q)^2\times \text{exp}\left(-\frac{\vert \Bar{x}_i-\Bar{x}_j\vert}{\theta} \right)$ for $1\le i,j \le n$ and $\Bar{x}_i$ being the midpoint of the respective element at $y=H$.
The matrix $\mbf{\Gamma}_{QQ}$ is invertible.
The nodal load vector $\unknown$ is obtained by integration over the shape functions with the map $\mbf{L}\in\R^{d\times n}$, where the contribution of element $i$ to the $e$-th component of $\unknown\in\R^d$ is $\mbf{L}_{e,i}\coloneqq \int_{A_i}N_e(\xi,\eta)\mathrm{d}\xi \mathrm{d}\eta$ for all elements $i$, and where $A_i$ is the spatial domain corresponding to the $i$-th quadrilateral element.
The matrix $\mbf{L}$ has full rank, i.e. rank $n$.
Then, $\PrMn\coloneqq \mbf{L}\bm\mu_Q\in\R^d$ and $\PrCov\coloneqq \mbf{L}\mbf{\Gamma}_{QQ}\mbf{L}^\top\in\R^{d\times d}$.
Since $d>n$, it follows that $\rank{\PrCov}=n$ and $\PrCov$ is singular.
We choose $\rootPrCov =\mbf{L} \mbf{\Gamma}_{QQ}^{1/2}$.
Since $\mbf{\Gamma}_{QQ}\in\R^{n\times n}$ is invertible and $\mbf{L}\in\R^{d\times n}$ has rank $n$, we have $\mbf{1}_n \in \range{\mbf{\Gamma}_{QQ}^{1/2}}$, and the range condition $\PrMn\in\range\rootPrCov$ in \Crefprop{proposition:OLR_approximate_forward_operator_general_prior_covariance} and \Cref{theorem:error_decomposition_approximate_root_PPH} is satisfied.

To define the observation operator $\ObsOper$ in \Crefass{assumption_main}, we drew $m=6$ locations from the uniform distribution at the top of the plate, i.e. at $y=H$ and fixed these as the locations at which to measure the state $\state$. The measurement noise follows $\mathcal{N}(0,\ObsCov)$ with $\ObsCov\coloneqq \sigma_\obs^2\times\Id_m\in\R^{m\times m}$, where $\sigma_\obs$ is set to $5\%$ of the maximum mean displacement.
We generate the data $\data$ by sampling the random variable $\dataRV$ defined according to \Crefass{assumption_main}.
For this choice of $\rootObsCov$ and for the choices described above, we have $\rankPPH\coloneqq \rank{\rootObsCov^{-1}\fwdOper\rootPrCov}=m$.
Thus, $\rootObsCov^{-1}\mbf{G}\rootPrCov$ has exactly $\rankPPH=m=6$ nonzero singular values $(\SgVal_i)_i$.
\begin{figure}
    \centering

    \begin{minipage}[b]{0.45\textwidth}
        \centering
        \includegraphics[width=\textwidth]{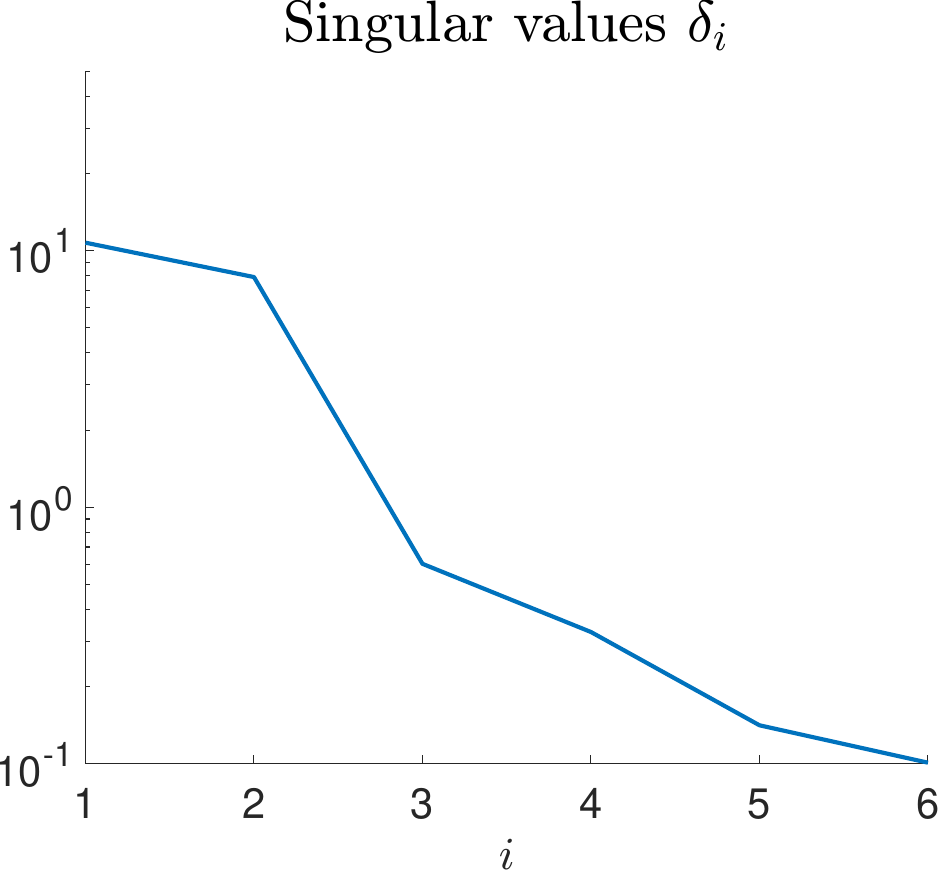}
        \small (a)
    \end{minipage}
    \hfill
    \begin{minipage}[b]{0.45\textwidth}
        \centering
        \includegraphics[width=\textwidth]{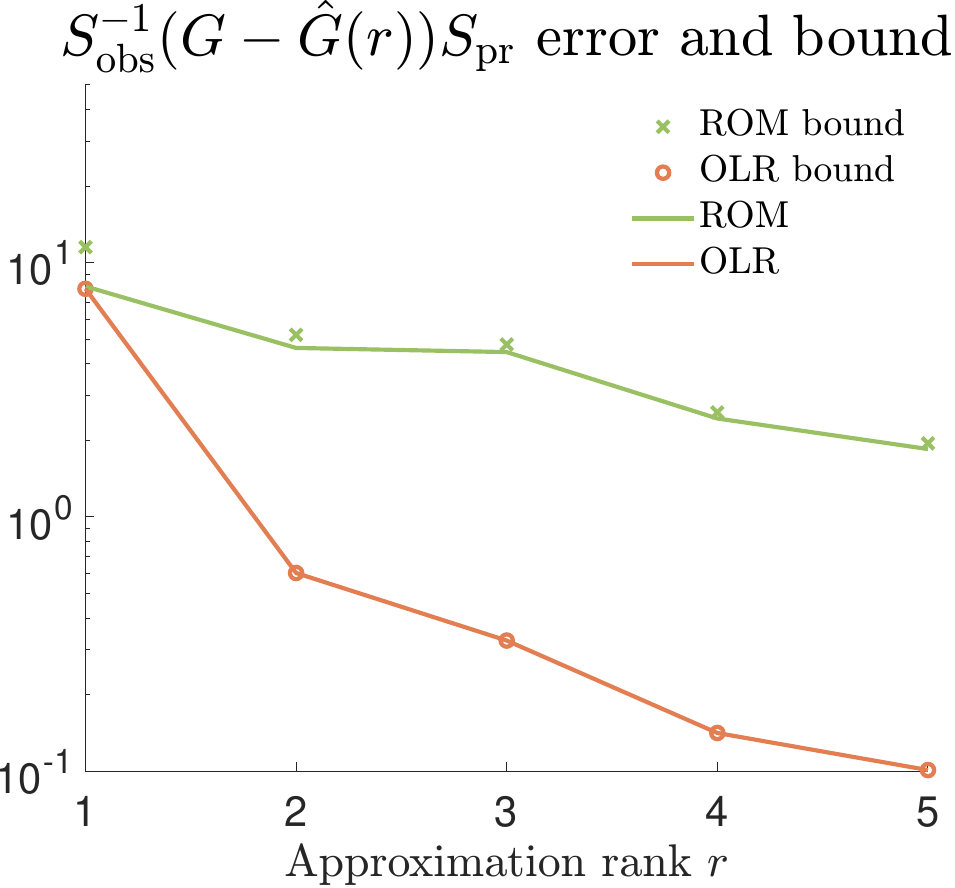}
        \small (b)
    \end{minipage}

    \medskip

    \begin{minipage}[b]{0.45\textwidth}
        \centering
        \includegraphics[width=\textwidth]{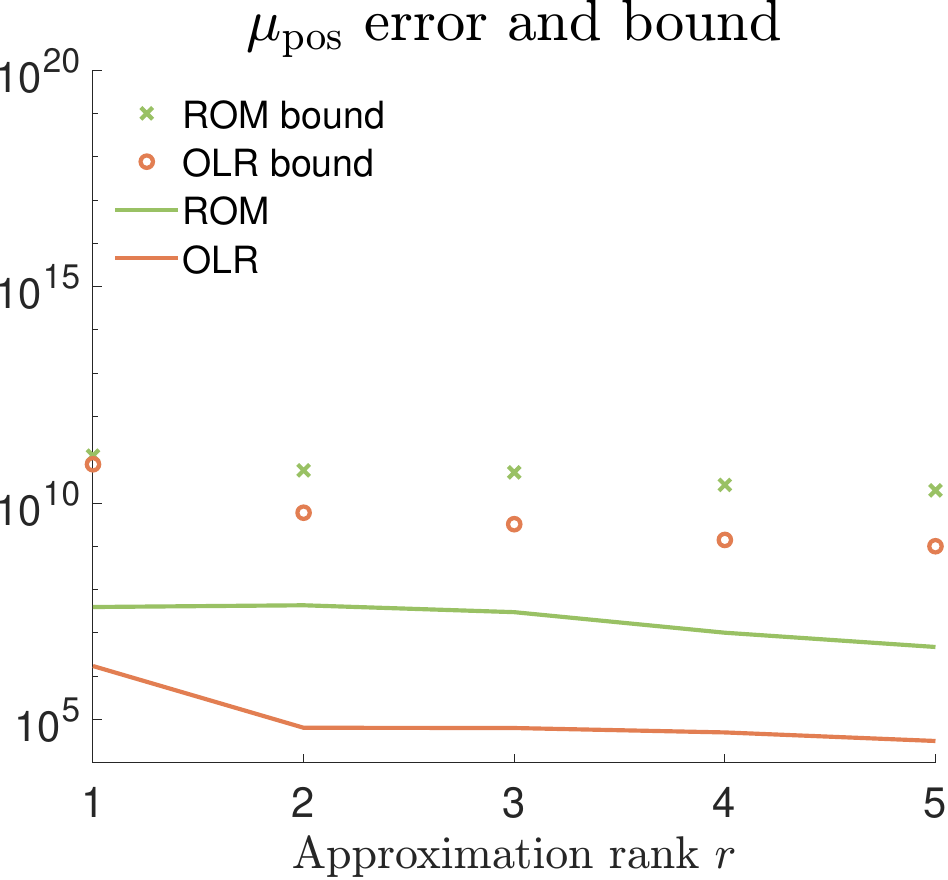}
        \small (c)
    \end{minipage}
    \hfill
    \begin{minipage}[b]{0.45\textwidth}
        \centering
        \includegraphics[width=\textwidth]{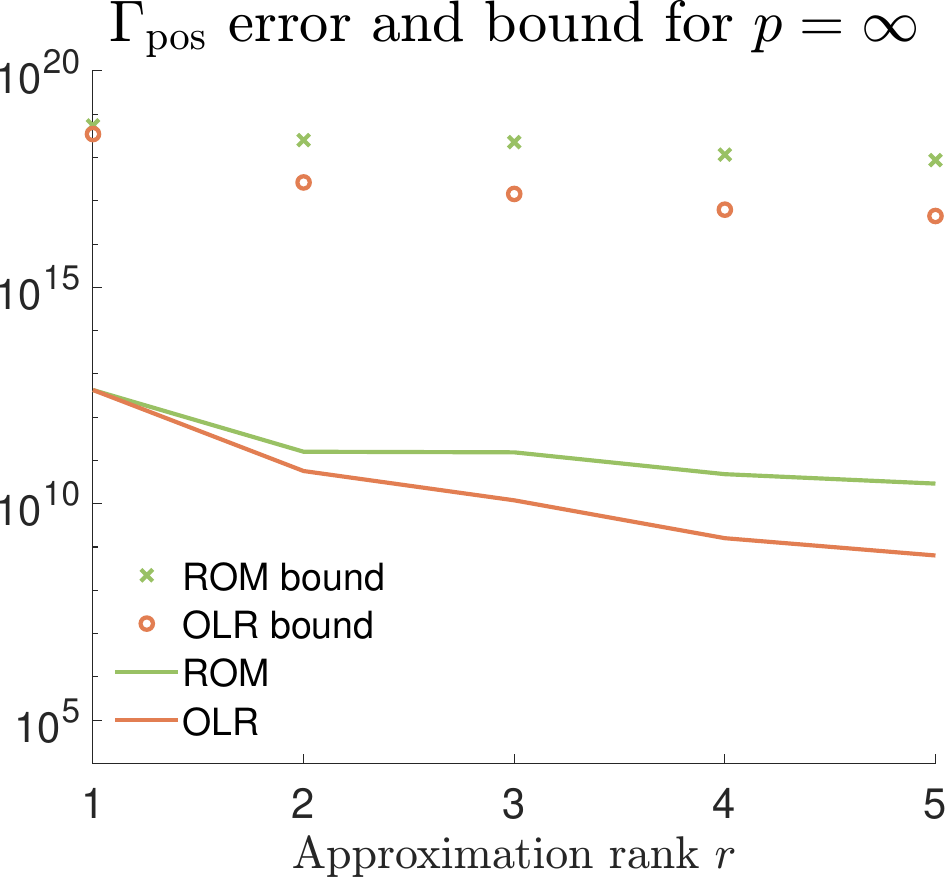}
        \small (d)
    \end{minipage}

    \caption{Comparison of $\rootObsCov^{-1}(\fwdOper-\hatFwdOper(r))\rootPrCov$, for $\hatFwdOper=\hatFwdOperOLR$ from \eqref{eq:OLR_approximate_forward_operator_general_prior_covariance} and $\hatFwdOper=\hatFwdOperROM$ from \eqref{eq:approximate_forward_operator}, for the plate problem in \Cref{sec:Plate}.
    \\
    (a): Singular values $(\SgVal_i)_i$ of $\rootObsCov^{-1}\fwdOper\rootPrCov$ from \eqref{eq:PPH_SVD}.
    \\
    (b): Errors $\Vert\rootObsCov^{-1}(\fwdOper-\hatFwdOper(r))\rootPrCov\Vert_\infty$ for $\hatFwdOper=\hatFwdOperOLR$ and $\hatFwdOper=\hatFwdOperROM$, the OLR bound $t(\infty,r)$ from \eqref{eq:OLR_errror_in_PPH}, and the ROM bound $b(\infty,r)$ from \eqref{eq:bound_on_error_of_root_PPH}.
    \\
    (c): Errors $\Vert \PosMn(\data)-\hatPosMn(\data)\Vert_2$ resulting from using $\hatFwdOperOLR$ and $\hatFwdOperROM$ in \eqref{eq:pos_mean_approx}, the OLR bound $C_2 t(\infty,r)$ from \eqref{eq:OLR_POS_Bound}, and the ROM bound $C_2^\prime b(\infty,r)$ from \eqref{eq:pos_error_ROM}.
    \\
    (d): Errors $\Vert \PosCov-\hatPosCov\Vert_\infty$ resulting from using $\hatFwdOperOLR$ and $\hatFwdOperROM$ in \eqref{eq:pos_covariance_approx}, the OLR bound $C_1 t(\infty,r)$ from \eqref{eq:OLR_POS_Bound}, and the ROM bound $C_1^\prime b(\infty,r)$ from \eqref{eq:pos_error_ROM}.}
    \label{fig:Plate}
\end{figure}

\Cref{fig:Plate}(a) shows the singular values $(\SgVal_i)_i$ of $\rootObsCov^{-1}\fwdOper\rootPrCov$ for this example.
\Cref{fig:Plate}(b) shows the errors $\Vert\rootObsCov^{-1}(\fwdOper-\hatFwdOper(r))\rootPrCov\Vert_\infty$ for $\hatFwdOper=\hatFwdOperOLR$ and $\hatFwdOper=\hatFwdOperROM$, the OLR bound $t(\infty,r)$ from \eqref{eq:OLR_errror_in_PPH}, and the ROM bound $b(\infty,r)$ from \eqref{eq:bound_on_error_of_root_PPH}. We again depict only the nonzero values, i.e., only approximations for $r<\rankPPH$.

As in \Cref{sec:Bar}, the OLR errors $\Vert\rootObsCov^{-1}(\fwdOper-\hatFwdOperOLR(r))\rootPrCov\Vert_\infty$ match the values of $(\delta_i)_{i\leq \rankPPH}$, consistent with
\eqref{eq:infty_Schatten_norm_of_error_equals_largest_trailing_singular_value}, and the analogous ROM error is larger than the OLR error. Moreover, we again observe that $t(\infty,r)$ is equal to the observed error of the OLR approximation while $b(\infty,r)$ is close but not exactly equal to the observed error of the ROM approximation; this is consistent with \eqref{eq:OLR_errror_in_PPH} and \eqref{eq:bound_on_error_of_root_PPH} respectively.
\Cref{fig:Bar}(c) and (d) show the errors $\norm{\PosMn(\data)-\hatPosMn(\data)}_2$ and $\norm{\PosCov-\hatPosCov}_\infty$ in the posterior mean and posterior covariance respectively for the ROM and OLR approximations, as well as the bounds on these errors given by \eqref{eq:OLR_POS_Bound} and \eqref{eq:pos_error_ROM}.
Due to the large values of the constants $(C_i,C_i^\prime)$ for $i=1,2$, the bounds are not tight, as per our expectations in \Crefrem{remark:tightness_of_error_bounds}.
However, the decay in the bounds tracks the decay in the errors, thus providing numerical evidence for the idea that the error in the approximate root prior-preconditioned Hessian is the key determinant for the error in the approximate posterior mean and approximate posterior covariance.
We also recall that the errors of the posterior mean and covariance approximation are equal to $0$ if $r=m$, because these errors are controlled by the errors in the corresponding approximate root prior-preconditioned Hessians, and that the ROM-based approximate forward model $\hatFwdOperROM(r)$ described in \Cref{subsection:ROM} yields an exact solution of the inverse problem, while reducing the dimension of the inverse problem from $d$ to $m$.

\section{Conclusion}
\label{section:conclusion}

This work analyses the approximation of the forward model for inverse problems with a linear forward model $\fwdOper$, Gaussian prior $\mathcal{N}(\PrMn,\PrCov)$, and Gaussian observation noise $\mathcal{N}(\mbf{0},\ObsCov)$.
In \Crefprop{proposition:optimal_low_rank_approximation}, we defined a low rank approximation $\hatFwdOperOLR$ of $\fwdOper$, and showed that the definition generalises the well-known optimal low-rank approximation from \cite{Spantini2015}---which assumes invertible prior covariances---to the context of possibly singular prior covariances.

The main theoretical analysis of this work concerns an approximation $\hatFwdOperROM$ proposed in \cite{Scheffels2025} of $\fwdOper$, where $\hatFwdOperROM$ is constructed by a reduced-order model based on Petrov-Galerkin projection of the linear parameter-to-state map associated to the PDE.
The first result in this direction, \Creflem{lemma:decomposition_of_root_of_approximate_PPH}, characterises the approximation $\hatFwdOperROM$ and shows that the corresponding root prior-preconditioned Hessian $\rootObsCov^{-1}\hatFwdOperROM(r)\rootPrCov$ agrees with the exact root prior-preconditioned Hessian $\rootObsCov^{-1} \fwdOper \rootPrCov$, when both root prior-preconditioned Hessians are projected to $\range~\lSgMat_r$, where $\lSgMat_r$ is the matrix of the $r$ leading left singular vectors of $\rootObsCov^{-1}\fwdOper\rootPrCov$.
This invariance under post-composition with $\lSgMat_r \lSgMat_r^\top$ leads to the second and main result in this direction, \Cref{theorem:error_decomposition_approximate_root_PPH}, which provides an identity for the error $\rootObsCov^{-1}(\hatFwdOperROM(r)-\fwdOper)\rootPrCov$ and a bound $b(p,r)$ for the $p$-Schatten norm of this error.
By combining this bound from a general result from \cite{KoenigLie2025}, we obtain a priori bounds on the approximate posterior covariance and approximate posterior mean associated to $\hatFwdOperROM(r)$.
In \Crefrem{remark:properties_of_bound_as_function_of_p}, we showed that the bound $b(p,r)$ vanishes when $r\geq m$.
This implies that if $r\geq m$, then $\hatFwdOperROM(r)$ identifies the low-dimensional structure in the inverse problem and yields the exact Gaussian posterior.

In \Cref{section:numerical_experiments}, we illustrated the performance of the bounds given in \Cref{theorem:error_decomposition_approximate_root_PPH} on two examples of problems that arise in the context of structural engineering, where the goal is to infer a static structural load in a boundary value problem.
Rank-deficient prior covariances arise in this context because uncertainty does not occur in every degree of freedom of the systems under consideration.
For both examples, actual errors and their a priori bounds are plotted, for both the optimal low-rank approximation $\hatFwdOperOLR$ and the reduced order model-based approximation $\hatFwdOperROM$.
The results show that $\hatFwdOperROM$ does not yield an optimal likelihood-informed ROM of $\fwdOper$, consistent with our theoretical analysis.
In addition, as the rank parameter $r$ increases, the a priori bounds closely track the decay in the observed errors, thus confirming the importance of the error in the approximate root prior-preconditioned Hessian as a determining factor of the error in the approximate posterior.

Our results emphasise that knowledge about the structure of the approximation of the forward model $\fwdOper$ can be exploited to yield tailored error bounds for approximations of the root prior-preconditioned Hessian, and consequently error bounds for the approximate posterior covariance and approximate posterior mean, in the context of linear Gaussian inverse problems.
They also show that the method in \cite{Scheffels2025} does not yield an optimal likelihood-informed model reduction method.
Potential directions for future research include the derivation of error bounds for different structure-preserving approximation methods, the development of an optimal likelihood-informed model reduction method, and structure-preserving methods for the setting where the solution or parameter-to-state operator $\fullModel^{-1}$ in \Crefass{assumption_main} may be nonlinear, as in \cite{Cvetkovic_etal_BayesOED_2024}.

\section*{Acknowledgements}
The research of HCL has been partially funded by the Deutsche Forschungsgemeinschaft (DFG) --- Project-ID 318763901 --- SFB1294 ``Data Assimilation''.
The research by JS was funded by the TUM Georg Nemetschek Institute and the Technical University of Munich - Institute for Advanced Study, Germany.

\appendix

\section{Proofs}
\label{section:proofs}
Recall the column submatrix notation from \Cref{subsection:notation}, and recall from \eqref{eq:PPH_SVD} that $\rankPPH=\rank{\rootObsCov^{-1}\fwdOper \rootPrCov}$.
\subsection{Proof for optimal low-rank approximation}
\label{subsection:proofs_OLR}

In this subsection, we prove the results from \Cref{subsection:optimal_low_rank_approximation}.
We first prove \Creflem{lemma:tilde_w_j_definitions_consistent}.
\begin{proof}
~For every $1\leq i\leq \rankPPH$,
\begin{equation*}
\rootPrCov^\top \fwdOper^\top \ObsCov^{-1} \fwdOper\widehat{\bm{w}}_i=(\rootObsCov^{-1}\fwdOper\rootPrCov)^\top  (\rootObsCov^{-1}\fwdOper\rootPrCov)\rSgVec_i=\rSgMat \SgValMat^\top \lSgMat^\top \lSgMat \SgValMat \rSgMat^\top \rSgVec_i=\SgVal_i^2 \rSgVec_i,
\end{equation*}
where the first equation follows from the hypothesis that $\widehat{\bm{w}}_i=\rootPrCov \rSgVec_i$, and the second and third equations follow from \eqref{eq:PPH_SVD}.
The hypotheses on $\PrCov$ and $\rootPrCov$ imply that $\rootPrCov$ is invertible.
Now
\begin{equation*}
 \SgVal_i^{-2}\fwdOper^\top \ObsCov^{-1} \fwdOper\widehat{\bm{w}}_i=\rootPrCov^{-\top} \rSgVec_i=\PrCov^{-1}\widehat{\bm{w}}_i,\quad 1\leq i\leq \rankPPH,
\end{equation*}
where the first equation follows by multiplying both sides of the equation $\rootPrCov^\top \fwdOper^\top \ObsCov^{-1} \fwdOper\widehat{\bm{w}}_i=\SgVal_i^2 \rSgVec_i$ from the left by $\delta_i^{-2}\rootPrCov^{-\top}$, and the second equation follows from \eqref{eq:hat_eigenvectors_and_tilde_eigenvectors_general_prior_covariances}.
For $1\leq i\leq \rankPPH$, $\SgVal_i^{-2}$ is defined, and thus the preceding display yields
\begin{align*}
    \PrCov^{-1}\widehat{\bm{w}}_i =& \fwdOper^\top\ObsCov^{-1}\fwdOper\widehat{\bm{w}}_i\SgVal_i^{-2}
    \\
    =& \fwdOper^\top(\rootObsCov^{-\top} \rootObsCov^{-1})\fwdOper\widehat{\bm{w}}_i\SgVal_i^{-2} &  \\
    =& \fwdOper^\top\rootObsCov^{-\top} \rootObsCov^{-1}\fwdOper (\rootPrCov \rSgVec_i )\SgVal_i^{-2}  & \widehat{\bm{w}}_i=\rootPrCov \rSgVec_i  \\
    =& \fwdOper^\top\rootObsCov^{-\top} \biggr( \sum_{j=1}^{d\wedge m} \SgVal_j \lSgVec_j \rSgVec_j^\top\biggr)\rSgVec_i \SgVal_i^{-2}=\fwdOper^\top\rootObsCov^{-\top}\lSgVec_i\SgVal_i^{-1} & \text{by \eqref{eq:PPH_SVD}}
\end{align*}
which completes the proof of \Creflem{lemma:tilde_w_j_definitions_consistent}.
\end{proof}

Next, we prove \Crefprop{proposition:OLR_approximate_forward_operator_general_prior_covariance}.

\begin{proof}
~We first prove \eqref{eq:OLR_approximate_forward_operator_general_prior_covariance_PPH}:
\begin{align*}
 \rootObsCov^{-1}\hatFwdOperOLR(r)\rootPrCov=&\rootObsCov^{-1}\fwdOper  \rootPrCov \rSgMat_r (\SgValMat_r^\top \SgValMat_r)^{-1/2} \lSgMat_r^\top \rootObsCov^{-1} \fwdOper\rootPrCov & \text{by \eqref{eq:testMat_and_trialMat}, \eqref{eq:OLR_approximate_forward_operator_general_prior_covariance}}
 \\
 =& \rootObsCov^{-1}\fwdOper  \rootPrCov \rSgMat_r (\SgValMat_r^\top \SgValMat_r)^{-1/2} \lSgMat_r^\top \left(\lSgMat \SgValMat \rSgMat^\top\right) & \text{by \eqref{eq:PPH_SVD}}
\\
 =&\rootObsCov^{-1}\fwdOper  \rootPrCov \rSgMat_r (\SgValMat_r^\top \SgValMat_r)^{-1/2}\underbrace{ \begin{bmatrix} \Id_r & \mbf{0}\end{bmatrix} }_{\in\R^{r\times m}} \SgValMat \rSgMat^\top
 \\
 =&\rootObsCov^{-1}\fwdOper  \rootPrCov \rSgMat_r \underbrace{\begin{bmatrix} \Id_r & \mbf{0}\end{bmatrix}}_{\in\R^{r\times n}}   \rSgMat^\top
 \\
 =&\rootObsCov^{-1}\fwdOper  \rootPrCov \rSgMat_r  \rSgMat_r^\top,
\end{align*}
where the third, fourth, and fifth equations follow from properties of $\lSgMat$, $\SgValMat$, and $\rSgMat$.
Thus
\begin{align*}
    &\Norm{\rootObsCov^{-1}(\fwdOper-\hatFwdOper(r))\rootPrCov}_p=\Norm{\rootObsCov^{-1}\fwdOper\rootPrCov-\rootObsCov^{-1}\hatFwdOper(r)\rootPrCov}_p
    \\
     =&\Norm{\rootObsCov^{-1}\fwdOper\rootPrCov-\rootObsCov^{-1}\fwdOper  \rootPrCov \rSgMat_r  \rSgMat_r^\top }_p=\Norm{\rootObsCov^{-1}\fwdOper\rootPrCov\left(\Id_d-\rSgMat_r  \rSgMat_r^\top \right)}_p & \text{by \eqref{eq:OLR_approximate_forward_operator_general_prior_covariance_PPH}}
     \\
       =& \Norm{ \lSgMat\SgValMat \rSgMat^\top \left(\Id_d-\rSgMat_r \rSgMat_r^\top \right)}_p = \Norm{ \lSgMat\SgValMat \rSgMat^\top \left(\rSgMat_\bot \rSgMat_\bot^\top \right)}_p =\Norm{\sum_{i>r} \SgVal_i \lSgVec_i\rSgVec_i^\top}_p & \text{by \eqref{eq:PPH_SVD}}\\
       =& t(p,r) & \text{by \eqref{eq:sum_trailing_singular_values}}
\end{align*}
which proves \eqref{eq:OLR_errror_in_PPH}.

Under the hypotheses that \Crefass{assumption_main} holds and $\PrMn\in\range\rootPrCov$, we may apply \cite[Theorem 3.1]{KoenigLie2025} to conclude that there exist finite scalars $C_1$ and $C_2$ such that \eqref{eq:OLR_POS_Bound} holds.

It remains to prove the final statement. Suppose $\PrCov$ is invertible and $\rootPrCov\in\R^{d\times d}$ satisfies $\PrCov=\rootPrCov\rootPrCov^\top$.
Then we may apply \Creflem{lemma:tilde_w_j_definitions_consistent}, and for $1\leq r\leq \rankPPH$,
\begin{align*}
\trialMat_r=& [\widetilde{\bm{w}}_1| \cdots |\widetilde{\bm{w}}_r] & \text{by \eqref{eq:trialMat}}
\\
=& [\PrCov^{-1}\widehat{\bm{w}}_1|\cdots |\PrCov^{-1}\widehat{\bm{w}}_r] & \text{by \Creflem{lemma:tilde_w_j_definitions_consistent}}
\\
=&\PrCov^{-1} \rootPrCov [\rSgVec_1|\cdots|\rSgVec_r] & \text{by \eqref{eq:hat_eigenvectors_and_tilde_eigenvectors_general_prior_covariances}}
\\
=& \rootPrCov^{-\top} [\rSgVec_1|\cdots|\rSgVec_r] &  \PrCov=\rootPrCov\rootPrCov^\top
\\
=& \rootPrCov^{-\top} \rSgMat_r & \text{by \eqref{eq:PPH_SVD}}
\end{align*}
and since $\testMat_r=\rootPrCov \rSgMat_r$ by \eqref{eq:testMat}, we obtain $\testMat_r \trialMat_r^\top= \rootPrCov \rSgMat_r (\rootPrCov^{-\top}\rSgMat_r)^\top$.
This completes the proof of \Crefprop{proposition:OLR_approximate_forward_operator_general_prior_covariance}.
\end{proof}

\subsection{Proofs for error analysis}
\label{subsection:proofs_error_analysis}
In this section, we prove the key results in \Cref{section:error_analysis}.
First, we prove \Creflem{lemma:decomposition_of_root_of_approximate_PPH}.
\begin{proof}
   ~Note that
\begin{align*}
 \redModel(r)\coloneqq &\trialMat_r^\top \fullModel \testMat_r &\text{by \eqref{eq:ROM_of_PDE}}
 \\
 =& \bigr(\fwdOper^\top \rootObsCov^{-\top}\lSgMat_r (\SgValMat_r^\top\SgValMat_r)^{-1/2}\bigr)^\top \fullModel \testMat_r & \text{by \eqref{eq:trialMat}}
 \\
 =&(\SgValMat_r^\top\SgValMat_r)^{-1/2} \lSgMat_r^\top \rootObsCov^{-1} \fwdOper \fullModel \testMat_r \\
 =&(\SgValMat_r^\top\SgValMat_r)^{-1/2} \lSgMat_r^\top \rootObsCov^{-1} \ObsOper \testMat_r& \text{by \eqref{eq:forward_operator}}
\end{align*}
which implies that $\redModel(r)$ is invertible for $1\leq r\leq \rankPPH$ if and only if $ \lSgMat_r^\top \rootObsCov^{-1} \ObsOper \testMat_r $ is invertible for $1\leq r \leq \rankPPH$, in which case
\begin{equation}
 \label{eq:rescaled_ROM_of_PDE}
\redModel(r)^{-1} = \left( \lSgMat_r^\top \rootObsCov^{-1} \ObsOper \testMat_r\right)^{-1} (\SgValMat_r^\top\SgValMat_r)^{1/2}.
\end{equation}
By \Crefass{assumption:ROM_of_PDE_invertible}, $\redModel(r)$ is invertible, for every $1\leq r\leq \rankPPH$.
Thus, we can rewrite the approximate forward model $\hatFwdOper(r)$ from \eqref{eq:approximate_forward_operator} as
\begin{align}
 \hatFwdOperROM(r)\coloneqq & \ObsOper \testMat_r \redModel(r)^{-1} \trialMat_r^\top  & \text{by \eqref{eq:approximate_forward_operator}}
 \nonumber
 \\
 =& \ObsOper \testMat_r \redModel(r)^{-1} \left((\SgValMat_r^\top\SgValMat_r)^{-1/2}\lSgMat_r^\top\rootObsCov^{-1}\fwdOper\right) & \text{by \eqref{eq:trialMat}}
 \nonumber
 \\
 =& \ObsOper \testMat_r \left( \lSgMat_r^\top \rootObsCov^{-1} \ObsOper \testMat_r\right)^{-1} \lSgMat_r^\top\rootObsCov^{-1}\fwdOper & \text{by \eqref{eq:rescaled_ROM_of_PDE}}
 \nonumber
\end{align}
which proves \eqref{eq:approximate_forward_operator_ROM}.
In turn, \eqref{eq:approximate_forward_operator_ROM} implies \eqref{eq:approximate_forward_operator_ROM_post_projection_property}:
\begin{align*}
  \lSgMat_r \lSgMat_r^\top \rootObsCov^{-1}\hatFwdOperROM(r)\rootPrCov=& \lSgMat_r \left(\lSgMat_r^\top \rootObsCov^{-1}\ObsOper \testMat_r \right)\left( \lSgMat_r^\top \rootObsCov^{-1} \ObsOper \testMat_r\right)^{-1} \lSgMat_r^\top \rootObsCov^{-1}\fwdOper\rootPrCov
  \\
  =& \lSgMat_r  \lSgMat_r^\top \rootObsCov^{-1}\fwdOper\rootPrCov.
 \end{align*}
 This completes the proof of \Creflem{lemma:decomposition_of_root_of_approximate_PPH}.
\end{proof}

We now prove \Cref{theorem:error_decomposition_approximate_root_PPH}.
\begin{proof}
~Under the hypotheses of \Cref{theorem:error_decomposition_approximate_root_PPH}, we can apply \Creflem{lemma:decomposition_of_root_of_approximate_PPH}.
Since $\lSgMat=[\lSgMat_r | \lSgMat_\bot]\in\R^{m\times m}$ in \eqref{eq:PPH_SVD} is orthogonal, we have $\Id_m=\lSgMat_r\lSgMat_r^\top +\lSgMat_\bot \lSgMat_\bot^\top$.
Thus
 \begin{align*}
  &\rootObsCov^{-1}\fwdOper \rootPrCov-\rootObsCov^{-1}\hatFwdOperROM(r) \rootPrCov
  \\
  =&\left(\lSgMat_r\lSgMat_r^\top +\lSgMat_\bot \lSgMat_\bot^\top\right)\left( \rootObsCov^{-1}\fwdOper \rootPrCov-\rootObsCov^{-1}\hatFwdOperROM(r) \rootPrCov\right)
  \\
  =&\lSgMat_\bot \lSgMat_\bot^\top \left( \rootObsCov^{-1}\fwdOper \rootPrCov-\rootObsCov^{-1}\hatFwdOperROM(r) \rootPrCov\right) & \text{by \eqref{eq:approximate_forward_operator_ROM_post_projection_property}}
  \\
  =&\lSgMat_\bot \lSgMat_\bot^\top\rootObsCov^{-1}\fwdOper \rootPrCov
  \\
  &-\lSgMat_\bot \lSgMat_\bot^\top\rootObsCov^{-1}\ObsOper \testMat_r \left( \lSgMat_r^\top \rootObsCov^{-1} \ObsOper \testMat_r\right)^{-1} \lSgMat_r^\top\rootObsCov^{-1}\fwdOper\rootPrCov & \text{by \eqref{eq:approximate_forward_operator_ROM}}
 \end{align*}
 which proves \eqref{eq:decomposition_of_error_of_root_PPH}.
 By \eqref{eq:PPH_SVD}, $\rootObsCov^{-1}\fwdOper\rootPrCov=\lSgMat_\bot(\SgValMat_\bot^\top \SgValMat_\bot)^{1/2}\rSgMat_\bot^\top+\lSgMat_r(\SgValMat_r^\top \SgValMat_r)^{1/2}\rSgMat_r^\top$.
 Substituting this into \eqref{eq:decomposition_of_error_of_root_PPH} yields \eqref{eq:decomposition_of_error_of_root_PPH_alternate_form}:
\begin{align*}
&  \rootObsCov^{-1}\fwdOper \rootPrCov-\rootObsCov^{-1}\hatFwdOperROM(r)\rootPrCov
\nonumber
\\
  =& \lSgMat_\bot (\SgValMat_\bot^\top \SgValMat_\bot)^{1/2} \rSgMat_\bot^\top
  -\lSgMat_\bot \lSgMat_\bot^\top\rootObsCov^{-1}\ObsOper \testMat_r \left( \lSgMat_r^\top \rootObsCov^{-1} \ObsOper \testMat_r\right)^{-1}(\SgValMat_r^\top\SgValMat_r)^{1/2}\rSgMat_r^\top.
\end{align*}
   By \eqref{eq:PPH_SVD} and the definition \eqref{eq:sum_trailing_singular_values} of $t(p,r)$,
  \begin{equation*}
   \Norm{ \lSgMat_\bot (\SgValMat_\bot^\top \SgValMat_\bot)^{1/2} \rSgMat_\bot^\top }_p=\biggr\Vert \sum_{i\geq r+1} \SgVal_i \lSgVec_i \rSgVec_i^\top\biggr\Vert_p=\norm{(\SgVal_i)_{i\geq r+1}}_{p}=t(p,r).
  \end{equation*}
  By applying the $p$-Schatten norm to both sides of \eqref{eq:decomposition_of_error_of_root_PPH_alternate_form}, the triangle inequality, and the definition \eqref{eq:bound_on_error_of_root_PPH} of $b(p,r)$,
\begin{align*}
 &\Norm{\rootObsCov^{-1}\fwdOper \rootPrCov-\rootObsCov^{-1}\hatFwdOper(r)\rootPrCov}_p
 \\
 \leq & t(p,r)+\Norm{\lSgMat_\bot \lSgMat_\bot^\top\rootObsCov^{-1}\ObsOper \testMat_r \left( \lSgMat_r^\top \rootObsCov^{-1} \ObsOper \testMat_r\right)^{-1}(\SgValMat_r^\top\SgValMat_r)^{1/2}\rSgMat_r^\top}_p\eqqcolon b(p,r),
  \end{align*}
thus proving \eqref{eq:bound_on_error_of_root_PPH}.

It remains to prove the bounds in \eqref{eq:pos_error_ROM}.
Under the hypotheses, we may apply \cite[Theorem 3.1]{KoenigLie2025} to conclude that for any $\data$, the stated scalars $C^\prime_i$, $i=1,2$ exist and satisfy
\begin{align*}
 \Norm{\PosCov-\hatPosCov}_p\leq& C^\prime_1 \norm{\rootObsCov^{-1}(\fwdOper-\hatFwdOper)\rootPrCov}_p
 \\
 \Norm{\PosMn(\data)-\hatPosMn(\data)}_2\leq &C^\prime_2 \norm{\rootObsCov^{-1}(\fwdOper-\hatFwdOper)\rootPrCov}_\infty.
\end{align*}
By the bound \eqref{eq:bound_on_error_of_root_PPH} from \Cref{theorem:error_decomposition_approximate_root_PPH}, the bounds on the approximate posterior covariance and approximate posterior mean in \eqref{eq:pos_error_ROM} follow.
This completes the proof of \Cref{theorem:error_decomposition_approximate_root_PPH}.
\end{proof}

\bibliographystyle{plainnat}
\bibliography{biblio}

@article{Aziznejad26042021,
author = {Aziznejad, Shayan and Unser, Michael},
title = {Duality Mapping for {S}chatten Matrix Norms},
fjournal = {Numerical Functional Analysis and Optimization},
journal = {Numer. Funct. Anal. Optim.},
volume = {42},
number = {6},
pages = {679--695},
year = {2021},
doi = {10.1080/01630563.2021.1922438},
}

@misc {4709579,
    TITLE = {Case of equality in spectral norm matrix triangle inequality},
    NOTE = {URL: https://math.stackexchange.com/q/4709579 (Accessed: 2026-05-11)},
    URL = {https://math.stackexchange.com/q/4709579}
}

@unpublished{Scheffels2025,
title		= {{Likelihood-informed model reduction for Bayesian inference of static structural loads}},
author		= {Scheffels, Jakob and Qian, Elizabeth and Papaioannou, Iason and Ullmann, Elisabeth},
year		= {2026},
doi         = {10.48550/arXiv.2510.07950},
note        = {arXiv:2510.07950. To appear in Struct. Multidiscip. Optim.}
}

@unpublished{CarereLie2025a,
title       = {Optimal low-rank posterior covariance approximation in linear {Gaussian} inverse problems on {Hilbert} spaces},
author      = {Carere, G. and Lie, H. C.},
year        = {2025},
doi 		= {10.48550/arXiv.2411.01112},
note        = {arXiv:2411.01112}}

@article{CarereLie2025b,
title = {{Optimal low-rank posterior mean and distribution approximation in linear Gaussian inverse problems on Hilbert spaces}},
author={Giuseppe Carere and Han Cheng Lie},
fjournal = {Inverse Problems and Imaging},
journal = {Inverse Probl. Imag.},
pages = {},
year = {2026},
doi = {10.3934/ipi.2026031},
}

@article{Flath2011,
author = {Flath, H. P. and Wilcox, L. C. and Akcelik, V. and Hill, J. and Van Bloemen Waanders, B. and Ghattas, O.},
title = {Fast algorithms for {B}ayesian uncertainty quantification in large-scale linear inverse problems based on low-rank partial {H}essian approximations},
fjournal = {SIAM Journal on Scientific Computing},
journal = {SIAM J. Sci. Comput.},
volume = {33},
number = {1},
pages = {407-342},
year = {2011},
doi = {10.1137/090780717},
}

@article{Spantini2015,
author      = {Spantini, Alessio and Solonen, Antti and Cui, Tiangang and Martin, James and Tenorio, Luis and Marzouk, Youssef},
title       = {Optimal Low-rank Approximations of {Bayesian} Linear Inverse Problems},
journal     = {SIAM J. Sci. Comput.},
volume      = {37},
number      = {6},
pages       = {A2451-A2487},
year        = {2015},
doi         = {10.1137/140977308}}

@article{eiermannGeometricAspectsTheory2001,
	title = {Geometric aspects of the theory of {Krylov} subspace methods},
	volume = {10},
	issn = {1474-0508, 0962-4929},
	doi = {10.1017/S0962492901000046},
	journal = {Acta Numerica},
	author = {Eiermann, Michael and Ernst, Oliver G.},
	year = {2001},
	pages = {251--312},
}

@unpublished{KoenigLie2025,
title       = {Posterior error bounds for prior-driven balancing}, 
author      = {K\"{o}nig, Josie and Lie, Han Cheng},
year        = {2025},
doi         = {10.48550/arXiv.2601.03971},
note        = {arXiv:2601.03971}
}

@article{Cvetkovic_etal_BayesOED_2024,
author = {Cvetkovi\'{c}, Nada and Lie, Han Cheng and Bansal, Harshit and Veroy, Karen},
title = {Choosing Observation Operators to Mitigate Model Error in {B}ayesian Inverse Problems},
fjournal = {SIAM/ASA Journal on Uncertainty Quantification},
journal = {SIAM/ASA J. Uncertain. Quantif.},
volume = {12},
number = {3},
pages = {723-758},
year = {2024},
doi = {10.1137/23M1602140},
}

@article{der1988stochastic,
  title={The stochastic finite element method in structural reliability},
  author={Der Kiureghian, Armen and Ke, Jyh-Bin},
  journal={PEM},
  volume={3},
  number={2},
  pages={83--91},
  doi = {10.1016/0266-8920(88)90019-7},
  year={1988},
  publisher={Elsevier}
}

@unpublished{konigDimensionModelReduction2025,
	title = {Dimension and model reduction approaches for linear {Bayesian} inverse problems with rank-deficient prior covariances},
	doi = {10.48550/arXiv.2506.23892},
	publisher = {arXiv},
	author = {König, Josie and Qian, Elizabeth and Freitag, Melina A.},
	year = {2025},
    note = {arXiv:2506.23892}
}

@article{stuartInverseProblemsBayesian2010b,
	title = {Inverse problems: {A} {Bayesian} perspective},
	volume = {19},
	doi = {10.1017/S0962492910000061},
	journal = {Acta Numerica},
	author = {Stuart, A. M.},
	year = {2010},
	pages = {451--559},
}

@article{sprungkLocalLipschitzStability2020a,
	title = {On the local {Lipschitz} stability of {Bayesian} inverse problems},
	volume = {36},
	doi = {10.1088/1361-6420/ab6f43},
	journal = {Inverse Problems},
	publisher = {IOP Publishing},
	author = {Sprungk, Björn},
	year = {2020},
	pages = {055015},
}

@article{AdaptiveMultifidelityPCE2019,
	title = {Adaptive multi-fidelity polynomial chaos approach to {Bayesian} inference in inverse problems},
	volume = {381},
	doi = {10.1016/j.jcp.2018.12.025},
	journal = {J. Comput. Phys.},
	author = {Yan, Liang and Zhou, Tao},
	year = {2019},
	pages = {110--128},
}

@article{novakPhysicsinformedPolynomialChaos2024,
	title = {Physics-informed polynomial chaos expansions},
	volume = {506},
	doi = {10.1016/j.jcp.2024.112926},
	journal = {J. Comput. Phys.},
	author = {Novák, Lukáš and Sharma, Himanshu and Shields, Michael D.},
	year = {2024},
	pages = {112926},
}

@article{LimitationsPolynomialChaos2015,
	title = {Limitations of polynomial chaos expansions in the {Bayesian} solution of inverse problems},
	volume = {282},
	doi = {10.1016/j.jcp.2014.11.010},
	journal = {J. Comput. Phys.},
	author = {Lu, Fei and Morzfeld, Matthias and Tu, Xuemin and Chorin, Alexandre J.},
	year = {2015},
	pages = {138--147}
}

@article{dinkelSolvingBayesianInverse2023,
  title={Solving {B}ayesian inverse problems with expensive likelihoods using constrained {G}aussian processes and active learning},
  author={Dinkel, Maximilian and Geitner, Carolin M and Rei, Gil Robalo and Nitzler, Jonas and Wall, Wolfgang A},
  journal={Inverse Problems},
  volume={40},
  number={9},
  pages={095008},
  year={2024},
  publisher={IOP Publishing},
  doi = {10.1088/1361-6420/ad5eb4},
}

@unpublished{villaniAdaptiveGaussianProcess2024,
	title = {Adaptive {Gaussian} {process} {regression} for {Bayesian} inverse problems},
	doi = {10.48550/arXiv.2404.19459},
	publisher = {arXiv},
	author = {Villani, Paolo and Unger, Jörg and Weiser, Martin},
	year = {2024},
    note = {arXiv:2404.19459}
}

@article{deveneyDeepSurrogateApproach2021,
	title = {Deep surrogate accelerated delayed-acceptance {H}amiltonian {M}onte {C}arlo for {B}ayesian inference of spatio-temporal heat fluxes in rotating disc systems},
	doi = {https://doi.org/10.1137/22M1513113},
	journal = {SIAM-ASA J. Uncertain. Quantif.},
	author = {Deveney, Teo and Mueller, Eike and Shardlow, Tony},
	year = {2023},
    pages={970-995},
    volume={11}
}

@article{yanAdaptiveSurrogateModeling2020,
	title = {An adaptive surrogate modeling based on deep neural networks for large-scale {Bayesian} inverse problems},
	doi = {10.4208/cicp.OA-2020-0186},
	journal = {Commun. Comput. Phys.},
	author = {Yan, Liang},
	year = {2020},
    volume = {28}
}

@article{pasparakisBayesianNeuralNetworks2025,
	title = {Bayesian neural networks for predicting uncertainty in full-field material response},
	volume = {433},
	doi = {10.1016/j.cma.2024.117486},
	journal = {Comput. Methods Appl. Mech. Eng.},
	author = {Pasparakis, George D. and Graham-Brady, Lori and Shields, Michael D.},
	year = {2025},
	pages = {117486},
}

@unpublished{pfortnerPhysicsInformedGaussianProcess2024,
  title={Physics-informed {G}aussian process regression generalizes linear {PDE} solvers},
  author={Pf{\"o}rtner, Marvin and Steinwart, Ingo and Hennig, Philipp and Wenger, Jonathan},
  journal={arXiv preprint arXiv:2212.12474},
  year={2022},
  doi = {10.48550/arXiv.2212.12474},
  note = {arXiv:2212.12474}
}

@book{Antoulas2005,
  author    = {Athanasios C. Antoulas},
  title     = {Approximation of large-scale dynamical systems},
  year      = {2005},
  publisher = {SIAM},
  address   = {Philadelphia},
  series    = {Advances in Design and Control},
  volume    = {6},
  doi       = {10.1137/1.9780898718713},
}

@article{benner2015survey,
  title={A survey of projection-based model reduction methods for parametric dynamical systems},
  author={Benner, Peter and Gugercin, Serkan and Willcox, Karen},
  journal={SIAM review},
  volume={57},
  number={4},
  pages={483--531},
  year={2015},
  doi={10.1137/130932715},
  publisher={SIAM}
}

@INPROCEEDINGS{bui-thanhExtremescaleUQBayesian2012,
  author={Bui-Thanh, Tan and Burstedde, Carsten and Ghattas, Omar and Martin, James and Stadler, Georg and Wilcox, Lucas C.},
  booktitle={SC '12: Proc. Int. Conf. High Perform. Comput. Netw. Storage Anal.}, 
  title={Extreme-scale {UQ} for {B}ayesian inverse problems governed by {PDE}s}, 
  year={2012},
  volume={},
  number={},
  pages={1-11},
  doi={10.1109/SC.2012.56}}

@article{bui-thanhComputationalFrameworkInfiniteDimensional2013,
	title = {A {computational} {framework} for {infinite}-{dimensional} {Bayesian} {inverse} {problems} {part} {I}: {the} {linearized} {case}, with {application} to {global} {seismic} {inversion}},
	volume = {35},
	doi = {10.1137/12089586X},
	number = {6},
	journal = {J. Sci. Comput.},
	author = {Bui-Thanh, Tan and Ghattas, Omar and Martin, James and Stadler, Georg},
	year = {2013},
	pages = {A2494--A2523}
}

@article{cuiLikelihoodinformedDimensionReduction2014b,
	title = {Likelihood-informed dimension reduction for nonlinear inverse problems},
	volume = {30},
	doi = {10.1088/0266-5611/30/11/114015},
	number = {11},
	journal = {Inverse Problems},
	author = {Cui, T and Martin, J and Marzouk, Y M and Solonen, A and Spantini, A},
	year = {2014},
	pages = {114015},
}

@article{zahm2022certified,
  title={Certified dimension reduction in nonlinear {B}ayesian inverse problems},
  author={Zahm, Olivier and Cui, Tiangang and Law, Kody and Spantini, Alessio and Marzouk, Youssef},
  journal={Math. Comp.},
  volume={91},
  number={336},
  pages={1789--1835},
  doi={10.1090/mcom/3737},
  year={2022}
}

@article{qianModelReductionLinearBalancing,
	title = {Model {Reduction} of {Linear} {Dynamical} {Systems} via {Balancing} for {Bayesian} {Inference}},
	volume = {91},
	doi = {10.1007/s10915-022-01798-8},
	number = {1},
	journal = {J. Sci. Comput.},
	author = {Qian, Elizabeth and Tabeart, Jemima M. and Beattie, Christopher and Gugercin, Serkan and Jiang, Jiahua and Kramer, Peter R. and Narayan, Akil},
	year = {2022},
	pages = {29},
}

@article{konigTimeLimitedBalancedTruncation2023,
	title = {Time-{Limited} {Balanced} {Truncation} for {Data} {Assimilation} {Problems}},
	volume = {97},
	doi = {10.1007/s10915-023-02358-4},
	number = {2},
	journal = {J. Sci. Comput.},
	author = {König, Josie and Freitag, Melina A.},
	year = {2023},
	pages = {47},
}

@unpublished{stavrinidesEnsembleKalmanApproach2025,
	title = {An ensemble {Kalman} approach to randomized maximum likelihood estimation},
	doi = {10.48550/arXiv.2507.03207},
	publisher = {arXiv},
	author = {Stavrinides, Pavlos and Qian, Elizabeth},
	year = {2025},
    note = {arXiv:2507.03207}
}

@article{freitagInferenceOrientedBalancedTruncation2024a,
	title = {Inference-{Oriented} {Balanced} {Truncation} for {Quadratic} {Dynamical} {Systems}: {Formulation} for {Bayesian} {Smoothing} and {Model} {Stability} {Analysis}},
	volume = {24},
	doi = {10.1002/pamm.202400051},
	number = {4},
	journal = {PAMM},
	author = {Freitag, Melina A. and König, Josie and Qian, Elizabeth},
	year = {2024},
	pages = {e202400051},
}

@incollection{lumley1981coherent,
	title = {Coherent {Structures} in {Turbulence}},
	booktitle = {Transition and {Turbulence}},
	publisher = {Academic Press},
	author = {Lumley, J. L.},
	month = jan,
	year = {1981},
	doi = {10.1016/B978-0-12-493240-1.50017-X},
	pages = {215--242},
}

@article{sirovich1987turbulence,
  title={Turbulence and the dynamics of coherent structures. {I. C}oherent structures},
  author={Sirovich, Lawrence},
  journal={Quart. Appl. Math.},
  volume={45},
  number={3},
  pages={561--571},
  doi={10.1090/qam/910462},
  year={1987}
}

@article{ghattasLearningPhysicsbasedModels2021a,
	title = {Learning physics-based models from data: perspectives from inverse problems and model reduction},
	volume = {30},
	doi = {10.1017/S0962492921000064},
	journal = {Acta Numerica},
	author = {Ghattas, Omar and Willcox, Karen},
	year = {2021},
	pages = {445--554},
}

@article{nguyenModelOrderReduction2014,
	title = {Model order reduction for {Bayesian} approach to inverse problems},
	volume = {1},
	doi = {10.1186/2196-1166-1-2},
	number = {1},
	journal = {Asia Pac. J. Comput. Eng.},
	author = {Nguyen, Ngoc-Hien and Cheong Khoo, Boo and Willcox, Karen},
	year = {2014},
	pages = {2},
}

@article{cuiDatadrivenModelReduction2015,
	author = {Cui, Tiangang and Marzouk, Youssef M. and Willcox, Karen},
	title = {Data-driven model reduction for the {Bayesian} solution of inverse problems},
	journal = {Int. J. Numer. Meth. Engng},
	volume = {102},	
    number = {5},
	pages = {966--990},
	year = {2015},
	doi = {10.1002/nme.4748}
}

@article{xiongAcceleratingBayesianInference2021,
	title = {Accelerating the {Bayesian} inference of inverse problems by using data-driven compressive sensing method based on proper orthogonal decomposition},
	volume = {29},
	doi = {10.3934/era.2021044},
	number = {5},
	journal = {Electron. Res. Arch.},
	author = {Xiong, Meixin and Chen, Liuhong and Ming, Ju and Shin, Jaemin and Xiong, Meixin and Chen, Liuhong and Ming, Ju and Shin, Jaemin},
	year = {2021},
	pages = {3383--3403},
}

@article{raoInverseParameterEstimation2024,
	title = {Inverse parameter estimation using compressed sensing and {POD}-{RBF} reduced order models},
	volume = {422},
	doi = {10.1016/j.cma.2024.116820},
	journal = {Comput. Methods Appl. Mech. Eng.},
	author = {Rao, Preetham P.},
	year = {2024},
	pages = {116820},
}

@article{hesthaven2022reduced,
  title={Reduced basis methods for time-dependent problems},
  author={Hesthaven, Jan S and Pagliantini, Cecilia and Rozza, Gianluigi},
  journal={Acta Numerica},
  volume={31},
  pages={265--345},
  year={2022},
  doi={10.1017/S09624929220000580},
  publisher={Cambridge University Press}
}

@article{rozza2024short,
  title={Real time reduced order computational mechanics},
  author={Rozza, Gianluigi and Ballarin, Francesco and Scandurra, Leonardo and Pichi, Federico},
  journal={SISSA springer series, Springer Cham},
  year={2024},
  doi={10.1007/978-3-031-49892-3},
  publisher={Springer},
  volume = {5}
}

@article{chenSteinVariationalReduced2021,
	title = {Stein {variational} {reduced} {basis} {Bayesian} {inversion}},
	volume = {43},
	doi = {10.1137/20M1321589},
	number = {2},
	journal = {J. Sci. Comput.},
	author = {Chen, Peng and Ghattas, Omar},
	year = {2021},
	pages = {A1163--A1193},
}

@article{silvaReducedBasisEnsemble2023,
	title = {A reduced basis ensemble {Kalman} method},
	volume = {14},
	doi = {10.1007/s13137-023-00235-8},
	
	number = {1},
	journal = {GEM Int. J. Geomath.},
	author = {Silva, Francesco A. B. and Pagliantini, Cecilia and Grepl, Martin and Veroy, Karen},
	year = {2023},
	pages = {24},
}

@article{cotterApproximationBayesianInverse2010a,
	title = {Approximation of {Bayesian} {Inverse} {Problems} for {PDEs}},
	volume = {48},
	doi = {10.1137/090770734},
	journal = {SIAM Journal on Numerical Analysis},
	publisher = {Society for Industrial and Applied Mathematics},
	author = {Cotter, S. L. and Dashti, M. and Stuart, A. M.},
	year = {2010},
	pages = {322--345},
}

\end{document}